\newtheorem{thm}{Theorem}[section]
\newtheorem{lem}[thm]{Lemma}
\theoremstyle{definition} 
\newtheorem{rem}[thm]{Remark}
\newtheorem{ex}[thm]{Example}}
\newtheorem{prop}[thm]{Proposition}
\begin{document}
%
%
%
%
%
%
%
\title[]
{On annulus presentations, dualizable patterns and RGB-diagrams}
\author{Keiji Tagami}
\subjclass[2010]{57M25}
\keywords{annulus presentation, annulus twist, dualizable pattern}
\address{The Faculty of Economic Sciences, Hiroshima Shudo University, Hiroshima 731-3195, JAPAN
}
\email{ktagami@shudo-u.ac.jp}
\date{\today}
\maketitle
%
\begin{abstract}
The $0$-trace of a knot is the $4$-manifold represented by the $0$-framing of the knot. 
In this manuscript, we survey methods constructing a pair of knots with diffeomorphic $0$-traces. 
In particular, we focus on Gompf-Miyazaki's dualizable pattern, Abe-Jong-Omae-Takeuchi's band presentation, and RGB-diagram given by Piccirillo and named by the author, and we draw the relations among these methods directly. 
As an application, we give a sufficient condition that two knots obtained by  Abe-Jong-Omae-Takeuchi's method coincide. 
\end{abstract}

\section{Introduction}
A knot in $\mathbf{S}^3=\partial \mathbf{B}^4$ is {\it smoothly slice} if it bounds a proper and smooth disk in $\mathbf{B}^4$. 
We can find many motivations to study the smooth sliceness of knots, for example:
\begin{itemize}
\item If the smooth $4$-dimensional Poincar{\' e} conjecture is true, then for two knots $K$ and $K'$ having homeomorphic $0$-surgeries $K$ is smoothly slice if and only if $K'$ is smoothly slice (\cite[Lemma~3.2]{AJOT}). 
\item It is conjectured that any smoothly slice knot is a ribbon knot (Slice-Ribbon Conjecture) \cite{Fox}. 
\end{itemize}
\par 
The {\it $0$-trace} of a knot is the $4$-manifold obtained from $\mathbf{B}^4$ by attaching a $2$-handle along the $0$-framing of the knot. 
The following theorem (Theorem~\ref{thm:slice-embed}) implies that the $0$-trace of a knot has the complete information to determine whether the knot is smoothly slice or not. 
%
\footnote{
It seems that Theorem~\ref{thm:slice-embed} has been known to the experts. 
However, the author cannot find the proper reference. 
For example, we can find a kind proof for this theorem in \cite[Theorem~1.8]{Miller-Piccirillo}. 
}
\begin{thm}\label{thm:slice-embed}
A knot is smoothly slice if and only if its $0$-trace smoothly embeds in $\mathbf{S}^4$. 
\end{thm}
%
\par 
Akbulut and Kirby \cite[Problem~1.19]{Kirby2} conjectured that two knots having homeomorphic $0$-surgeries are concordant. 
As mentioned above, if one of the two knots is smoothly slice, this conjecture is true under the smooth $4$-dimensional Poincar{\'e} conjecture. 
However, it has been proved that this conjecture is false. 
In fact, Yasui \cite{Yasui} gave infinitely many counterexamples for Akbulut-Kirby's conjecture. 
\par 
Gompf and Miyazaki \cite[Proposition~3.1]{Gompf-Miyazaki} gave a pair of knots which have homeomorphic $0$-surgeries and whose connected sum is not ribbon by utilizing a pattern, which is called a {\it dualizable pattern} in this manuscript (for definition, see Section~\ref{sec:dualizable}). 
In particular, there is no ribbon concordance between them in both directions. 
Abe and the author \cite{Abe-Tagami} also gave such a pair of knots. 
In \cite{Abe-Tagami}, we use the technique called ``annulus twist" and ``annulus presentation", which are essentially given by Osoinach \cite{Osoinach} and improved in \cite{Teragaito, AJOT}. 
By utilizing \cite[Theorem~2.8]{AJOT}, we see that Abe and the author's knots have diffeomorphic $0$-traces. 
We also see that Gompf and Miyazaki's knots have diffeomorphic $0$-traces by \cite[Theorem~3.1]{Miller-Piccirillo}. 
Hence, we can consider the following question: 
\begin{center}
Are two knots having diffeomorphic $0$-traces concordant? 
\end{center}
\par
This question has been negatively solved by Miller and Piccirillo \cite{Miller-Piccirillo}. 
In fact, they gave infinitely many pairs of knots such that they have diffeomorphic $0$-traces and yet are distinct in smooth concordance by using dualizable patterns. 
They also mentioned a relation between annulus presentations and dualizable patterns. 
\par 
Recently, Piccirillo \cite{Piccirillo} introduced a class of Kirby diagrams $R\cup G\cup B$. 
In this manuscript, we call a Kirby diagram of the class an {\it RGB-diagram} (for definition, see Section~\ref{sec:RGB}). 
As an application, Piccirillo \cite{Piccirillo2} constructed a non-slice knot whose $0$-trace is diffeomorphic to that of the Conway knot. 
In particular, we see that the Conway knot is not smoothly slice by Theorem~\ref{thm:slice-embed}. 
We remark that Piccirillo's construction can be explained in terms of annulus presentations and annulus twists (see Remark~\ref{rem:unknotting-one}). 
\par 
In this manuscript, we survey annulus presentations, dualizable patterns and RGB-diagrams, and we draw an RGB-diagram from an annulus presentation explicitly. 
In particular, we clarify the relation among ``special" annulus presentations (defined in Section~\ref{sec:annulus}), dualizable patterns and RGB-diagrams (see Theorems~\ref{thm:annulus-RGB} and Figure~\ref{figure:relation}). 
Moreover, we extend this relation to the oriented case (Theorem~\ref{thm:annulus-RGB-oriented}). 
As an application, we give a sufficient condition for a knot with an annulus presentation to be preserved under the corresponding annulus twist (Theorem~\ref{thm:trivial}). 
\begin{figure}[h]
\centering
\includegraphics[scale=1.3]{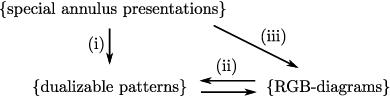}
\caption{Relations among special annulus presentations, dualizable patterns and RGB-diagrams. 
(i) is given by Miller-Piccirillo \cite[Section~5]{Miller-Piccirillo}. 
(ii) and its inverse are given by Piccirillo \cite[Proposition~4.2]{Piccirillo}. 
(iii) is given in Theorem~\ref{thm:annulus-RGB}.
The inverses of (i) and (iii) do not exist (Remark~\ref{rem:inverse}). 
These relations can be extended to oriented versions (see Sections~\ref{sec:annulus-dualizable-oriented} and \ref{sec:RGB-diagram-oriented}, and Theorem~\ref{thm:annulus-RGB-oriented}). }
\label{figure:relation}
\end{figure}

%
%

\subsection{Notation}
Throughout this manuscript, 
\begin{itemize}
\item we denote the $3$-manifold obtained from $\mathbf{S}^3$ by applying $n$-framed surgery on a knot $K\subset \mathbf{S}^3$ by $M_{K}(n)$, 
\item the {\it $n$-trace} of a knot $K$ is the $4$-manifold obtained from $\mathbf{B}^4$ by attaching a $2$-handle along an $n$-framed knot $K\subset \mathbf{S}^3$ and we denote it by $X_{K}(n)$, 
\item we denote an open regular neighborhood of a submanifold $P$ in a manifold $V$ by $\nu(P)$, 
\item we denote the unknot in $\mathbf{S}^3$ by $U$, 
\item unless specifically mentioned, all knots and links are smooth and unoriented, and all other manifolds are smooth and oriented, 
\item for a manifold $M$, define $-M$ to be the manifold obtained by reversing the orientation, 
\item for a Kirby diagram $L$, we denote the $4$-manifold that $L$ represents by $X_{L}$, and
\item we will use $\cong $ to denote orientation-preservingly differomorphic $4$-manifolds or homeomorphic $3$-manifolds. 
\end{itemize}
%
%
\section{Annulus presentation}\label{sec:annulus}
%
\subsection{Annulus twist}
Let $A\subset \mathbf{S}^{3}$ be an embedded annulus with $\partial A=c_{1}\cup c_{2}$. 
%
An {\it $n$-fold annulus twist along $A$} is to apply $(\operatorname{lk}(c_1,c_2)+1/n)$-surgery on $c_{1}$ and $(\operatorname{lk}(c_1,c_2)-1/n)$-surgery on $c_{2}$, where $\operatorname{lk}(c_1,c_2)$ is the linking number of $c_1$ and $c_2$, and we give $c_1$ and $c_2$ parallel orientations. 
We see that the resulting manifold obtained by an annulus twist is $\mathbf{S}^{3}$. 
%
%
%
%
%
%
\subsection{Annulus presentation}
Let $A\subset \mathbf{S}^{3}$ be an embedded annulus with $\partial A=c_{1}\cup c_{2}$. 
Take an embedding of a band $b\colon I\times I\rightarrow \mathbf{S}^{3}$ such that 
\begin{itemize}
\item $b(I\times I)\cap \partial A=b(\partial I\times I)$, 
\item $b(I\times I)\cap \operatorname{Int} A$ consists of ribbon singularities, and 
\item $A\cup b(I\times I)$ is an immersion of an orientable surface, 
\end{itemize}
where $I=[0, 1]$. 
If a knot $K\subset \mathbf{S}^3$ is isotopic to the knot $(\partial A\setminus b(\partial I\times I))\cup b(I\times \partial I)$, 
then we call $(A, b)$ an {\it annulus presentation} of $K$. 
An annulus presentation $(A,b)$ is {\it special} if $A$ is a Hopf band. 
%


%
\begin{rem}\label{rem:bandpre}
In the definition of special annulus presentations, if we omit the condition that $A\cup b(I\times I)$ is an immersion of an orientable surface, it coincides with the definition of band presentations defined by Abe, Jong, Omae and Takeuchi \cite{AJOT}.  
In particular, a special annulus presentation is a band presentation. 
Note that in \cite{AJLO,abe-tange}, our special annulus presentations are called ``annulus presentations". 
\end{rem}
\begin{ex}
The knot $6_{3}$ has a special annulus presentation $(A, b)$ (Figure~\ref{figure:annulus-pre}). 
\end{ex}
\begin{figure}[h]
\centering
\includegraphics[scale=0.76]{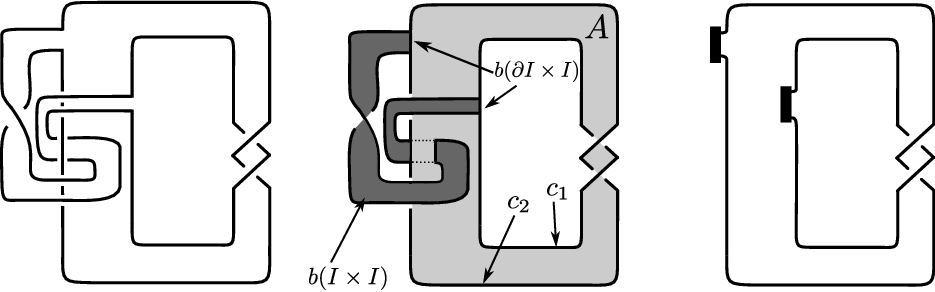}
\caption{A special annulus presentation of $6_3$. For simplicity, we often draw an annulus presentation as the right picture, where the attaching regions for the band are represented by bold arcs and the bands are omitted. }
\label{figure:annulus-pre}
\end{figure}
Let $K$ be a knot with an annulus presentation $(A, b)$. 
Let $\widetilde{A}\subset A$ be a shrunken annulus with $\partial\widetilde{A}=\widetilde{c}_{1}\cup \widetilde{c}_{2}$ which satisfies the following: 
\begin{itemize}
\item $\overline{A\setminus \widetilde{A}}$ is a disjoint union of two annuli, 
\item each $\widetilde{c}_{i}$ is isotopic to $c_{i}$ in $\overline{A\setminus \widetilde{A}}$ for $i=1,2$, and 
\item $A\setminus (\partial A\cup \widetilde{A})$ does not intersect $b(I\times I)$. 
\end{itemize}

Then, by $A^{n}(K)$, 
we denote the knot obtained from $K$ by  the $n$-fold annulus twist along $\widetilde{A}$. 
For simplicity, we also use $A(K)$ instead of $A^{1}(K)$. 
%
\begin{ex}
We consider the knot $6_{3}$
with the annulus presentation $(A,b)$ in  Figure \ref{figure:annulus-pre}.
Then $A(6_{3})$ is the right picture in Figure \ref{figure:annulus-twist}.
\end{ex}
\begin{figure}[h]
\centering
\includegraphics[scale=0.75]{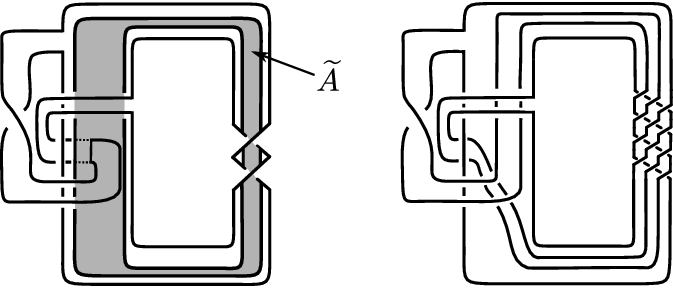}
\caption{A shrunken annulus $\tilde{A}$ for an annulus presentation of $6_{3}$ (left) and $A(6_{3})$ (right). 
}\label{figure:annulus-twist}
\end{figure}
The following theorem is essentially due to Osoinach \cite[Theorem 2.3]{Osoinach}. 
%
%
%
\begin{thm}\label{thm:Osoinach} 
Let $K\subset \mathbf{S}^3$ be a knot with an annulus presentation $(A, b)$. 
Then, for any $n\in\mathbf{Z}$ there is an orientation-preservingly homeomorphism $\phi_{n}\colon M_{K}(0) \rightarrow M_{A^{n}(K)}(0)$. 
\end{thm}
The homeomorphism $\phi_{n}$ in Theorem~\ref{thm:Osoinach} is concretely given by Figure~\ref{figure:Osoinach-homeo} (see also \cite{Teragaito}). 
\par 
\begin{figure}[h]
\centering
\includegraphics[scale=0.75]{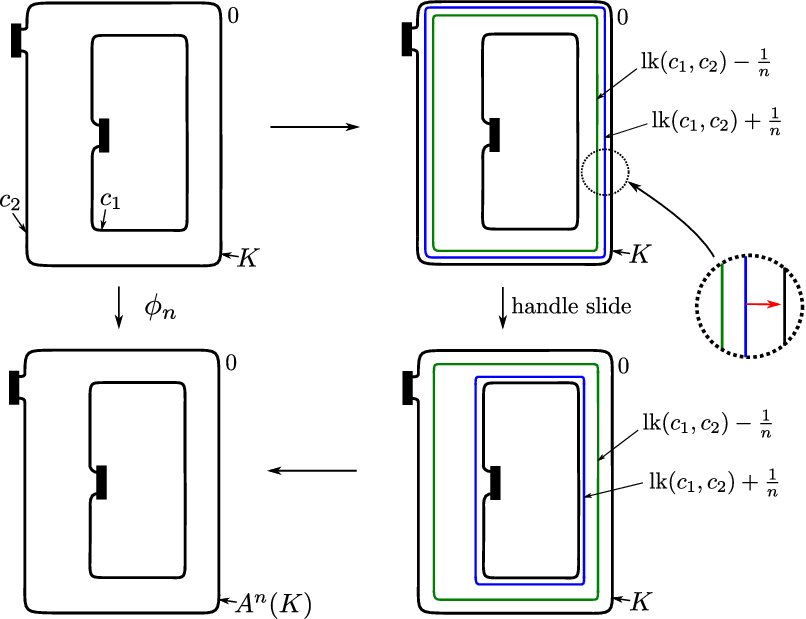}
\caption{(color online) Homeomorphism $\phi_{n}$. In this picture the annulus $A$ may be knotted and twisted. 
} 
\label{figure:Osoinach-homeo}
\end{figure}
Abe, Jong, Omae and Takeuchi \cite[Theorem~2.8]{AJOT} partially extended Theorem~\ref{thm:Osoinach}. 
In particular, by applying their result to a knot with a special annulus presentation, we obtain the following. 
\begin{thm}\label{thm:AJOT} 
Let $K\subset \mathbf{S}^3$ be a knot with a special annulus presentation $(A, b)$. 
Then, the homeomorphism $\phi_{n}\colon M_{K}(0) \rightarrow M_{A^{n}(K)}(0)$ given by Figure~\ref{figure:Osoinach-homeo} extends to an orientation-preservingly diffeomorphism $\Phi_{n}\colon X_{K}(0)\rightarrow X_{A^{n}(K)}$ for any $n\in\mathbf{Z}$. 
\end{thm}
%
%
%
%
\section{Dualizable pattern}\label{sec:dualizable}
In this section, we recall the definition of dualizable patterns. 
For details on dualizable patterns, see \cite{Gompf-Miyazaki} and \cite{Miller-Piccirillo}. 
%
\subsection{Dualizable pattern}
Let $P\colon \mathbf{S}^1\rightarrow V$ be an oriented knot in a solid torus $V=\mathbf{S}^1\times D^2$. 
Such a $P$ is called a {\it pattern}. 
By an abuse of notation, we use the notation $P$ for both a map and its image. 
Suppose that the image $P(\mathbf{S}^1)$ is not null-homologous in $V$. 
Define $\lambda_{V}$, $\mu_{P}$, $\mu_{V}$ and $\lambda_{P}$ as follows: 
\begin{itemize}
\item put $\lambda_{V}=\mathbf{S}^1\times \{x_0\}\subset \partial V \subset V$ for some $x_0\in \partial D^{2}$ and orient $\lambda_{V}$ so that $P$ is homologous to $k\lambda_{V}$ in $V$ for some positive $k\in\mathbf{Z}_{>0}$, 
\item define $\mu_{P}\subset V$ by a meridian of $P$ and orient $\mu_{P}$ so that the linking number of $P$ and $\mu_{P}$ is $1$, 
\item put $\mu_{V}=\{x_1\}\times \partial D^2\subset \partial V \subset V$ for some $x_{1}\in \mathbf{S}^1$ and orient $\mu_{V}$ so that $\mu_{V}$ is homologous to $l\mu_{P}$ in $V\setminus \nu(P)$ for some positive $l\in\mathbf{Z}_{>0}$, and 
\item define $\lambda_{P}$ by a longitude of $P$ which is homologous to $m\lambda_{V}$ in $V\setminus \nu(P)$  for some positive $m\in\mathbf{Z}_{>0}$. 
\end{itemize}
\par
For an oriented knot $K\subset \mathbf{S}^3$, let $\iota_{K}\colon V\rightarrow \mathbf{S}^3$ be an embedding which identifies $V$ with $\overline{\nu(K)}$ and 
sends $\lambda_{V}$ to the $0$-framing of $K$. 
%
%
Then $\iota_{K}\circ P\colon \mathbf{S}^1\rightarrow \mathbf{S}^3$ represents an oriented knot. 
The knot is called the {\it satellite} of $K$ with pattern $P$ and denoted by $P(K)$. 
\par
A pattern $P\colon \mathbf{S}^{1}\rightarrow V$ is {\it dualizable} if there is a pattern $P^{\ast}\colon \mathbf{S}^{1}\rightarrow V^{\ast}$ and an orientation-preserving homeomorphism $f\colon V\setminus \nu(P)\rightarrow V^{\ast}\setminus \nu(P^{\ast})$ such that 
$f(\lambda_{V})=\lambda_{P^{\ast}}$, $f(\lambda_{P})=\lambda_{V^{\ast}}$, $f(\mu_{V})=-\mu_{P^{\ast}}$ and $f(\mu_{P})=-\mu_{V^{\ast}}$. 
We call $P^{\ast}$ the {\it dual} of $P$. 
\par
Miller and Piccirillo \cite[Proposition~2.5]{Miller-Piccirillo} introduced a convenient technique to determine whether a given pattern is dualizable as follows (see also \cite[Section~2]{Gompf-Miyazaki}). 
Define $\Gamma\colon \mathbf{S}^1\times D^{2} \rightarrow \mathbf{S}^1\times \mathbf{S}^2$ by $\Gamma(t,d)=(t,\gamma(d))$, where $\gamma\colon D^{2}\rightarrow \mathbf{S}^2$ is an arbitrary orientation-preserving embedding. 
For any curve $c\colon \mathbf{S}^1\rightarrow \mathbf{S}^1\times D^{2}$, define $\widehat{c}=\Gamma\circ c \colon \mathbf{S}^1\rightarrow \mathbf{S}^1\times \mathbf{S}^2$. 
Then, we obtain the following proposition. 
\begin{prop}[{\cite[Proposition~2.5]{Miller-Piccirillo}}]\label{prop:MP}
A pattern $P$ in a solid torus $V$ is dualizable if and only if $\widehat{P}$ is isotopic to $\widehat{\lambda_{V}}$ in $\mathbf{S}^{1}\times \mathbf{S}^2$. 
\end{prop}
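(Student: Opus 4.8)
The plan is to translate both conditions into properties of the complement $C:=V\setminus\nu(P)$, using the standard genus-one Heegaard decomposition of $\mathbf{S}^1\times\mathbf{S}^2$ supplied by $\Gamma$. Writing $\gamma(D^2)\subset\mathbf{S}^2$ for the disk used to define $\Gamma$, the map $\Gamma$ identifies $V$ with the solid torus $\Gamma(V)=\mathbf{S}^1\times\gamma(D^2)$, whose complement $W=\mathbf{S}^1\times\overline{\mathbf{S}^2\setminus\gamma(D^2)}$ is again a solid torus glued so that its meridian $\mu_W$ equals $\mu_V$. Thus $\mathbf{S}^1\times\mathbf{S}^2=\Gamma(V)\cup_{\mu_V}W$, the curve $\widehat{\lambda_V}=\mathbf{S}^1\times\{\gamma(x_0)\}$ is isotopic to a fiber $\mathbf{S}^1\times\{\mathrm{pt}\}$ and hence to the core of $W$, and the complement of $\widehat P$ is $(\mathbf{S}^1\times\mathbf{S}^2)\setminus\nu(\widehat P)=C\cup_{\mu_V}W$. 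The topological fact I would rely on throughout is the uniqueness of genus-one Heegaard splittings of $\mathbf{S}^1\times\mathbf{S}^2$: if $C\cup_{\mu_V}W$ is a solid torus, then $\nu(\widehat P)\cup(C\cup_{\mu_V}W)$ is such a splitting, so its core $\widehat P$ is isotopic to the standard core $\widehat{\lambda_V}$; conversely, $\widehat P\simeq\widehat{\lambda_V}$ clearly forces the complement to be a solid torus.

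For the forward direction, suppose $P$ is dualizable, with dual $P^*\subset V^*$ and $f\colon C\to C^*:=V^*\setminus\nu(P^*)$ as in the definition. Because $f(\mu_V)=-\mu_{P^*}$, the homeomorphism $f$ carries the Dehn filling of $\partial V$ along $\mu_V$ to the Dehn filling of $\partial\nu(P^*)$ along $\mu_{P^*}$; but filling $\partial\nu(P^*)$ back along its meridian $\mu_{P^*}$ merely reglues the tubular neighbourhood, so $C^*\cup_{\mu_{P^*}}\nu(P^*)=V^*$ is a solid torus. Hence $C\cup_{\mu_V}W\cong V^*$ is a solid torus, and by the criterion above $\widehat P$ is isotopic to $\widehat{\lambda_V}$.

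For the converse, assume $\widehat P\simeq\widehat{\lambda_V}$, so that $M:=C\cup_{\mu_V}W$ is a solid torus. I would then set $V^*:=M$, let $P^*$ be the core of $W\subset M$, and take $f=\mathrm{id}_C$: since $M\setminus\nu(P^*)=C\cup_{\mu_V}(W\setminus\nu(P^*))=C$, the identity is an orientation-preserving homeomorphism $V\setminus\nu(P)\to V^*\setminus\nu(P^*)$. It remains to verify the four slope conditions. Under these identifications $\partial V$ becomes $\partial\nu(P^*)$ while $\partial\nu(P)$ becomes $\partial V^*$; the filling slope $\mu_V=\mu_W$ is precisely the meridian $\mu_{P^*}$ of the core of $W$, and $\lambda_V$, dual to $\mu_V$ on $\partial W$, is its longitude $\lambda_{P^*}$. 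On the other boundary, $\mu_P$ is null-homologous in $M$ (here $\widehat P\simeq\widehat{\lambda_V}$ gives winding number one, so $\mu_P\sim\mu_V$ in $H_1(C)$ and $\mu_V$ bounds in $W$), hence $\mu_P$ is the meridian $\mu_{V^*}$, and $\lambda_P$ is the longitude $\lambda_{V^*}$. Finally I would orient $P^*$ so that the signs read $f(\mu_V)=-\mu_{P^*}$ and $f(\mu_P)=-\mu_{V^*}$, completing the dualizing data.

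The step I expect to be the main obstacle is the topological input used in the first paragraph: upgrading ``$C\cup_{\mu_V}W$ is a solid torus'' to ``$\widehat P$ is ambient isotopic to $\widehat{\lambda_V}$'' genuinely requires the classification of genus-one Heegaard splittings (equivalently, of knots with solid-torus complement) in $\mathbf{S}^1\times\mathbf{S}^2$. The rest is careful bookkeeping: normalizing the winding number to one so that $\widehat P$ represents a generator and the multiplicities $k,l,m$ all equal $1$, and checking that the chosen orientations yield the signs $-\mu_{P^*}$ and $-\mu_{V^*}$ demanded by the definition rather than their negatives.
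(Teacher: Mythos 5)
Your proposal is correct and follows essentially the same route as the paper's proof: both directions are handled by filling the boundary torus $\partial V$ (resp.\ $\partial\nu(P^{\ast})$) with a solid torus along $\mu_{V}$ (resp.\ $\mu_{P^{\ast}}$), invoking the uniqueness up to isotopy of genus-one Heegaard splittings of $\mathbf{S}^{1}\times\mathbf{S}^{2}$, and using the positivity of the winding number to fix the sign $+\widehat{\lambda_{V}}$ rather than $-\widehat{\lambda_{V}}$. Your dual pattern (the core of $W$) is isotopic to the paper's choice $Q=\widehat{\lambda_{V}}\subset V^{\ast}$, so the constructions agree.
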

\begin{proof}
For the sake of completeness, we introduce the proof due to Miller and Piccirillo. 
Suppose that $\widehat{P}$ is isotopic to $\widehat{\lambda_{V}}$ in $\mathbf{S}^{1}\times \mathbf{S}^2$. 
Then, $V^{\ast}=\mathbf{S}^{1}\times \mathbf{S}^2\setminus \nu(\widehat{P})\cong \mathbf{S}^{1}\times \mathbf{S}^2\setminus \nu(\widehat{\lambda_{V}})$ is a solid torus. 
We identify $V^{\ast}$ with $\mathbf{S}^{1}\times D^2$ so that  $\widehat{\lambda_{P}}\subset V^{\ast}$ is identified with $\lambda_{V^{\ast}}=\mathbf{S}^{1}\times \{x_{0}\}$ for some $x_0\in D^{2}$. 
Define a pattern $Q$ by $\widehat{\lambda_{V}}\subset V^{\ast}$. 
Note that the orientation of $\widehat{\lambda_{V}}$ is determined by that of $\widehat{P}$. 
Since $\mathbf{S}^{1}\times \mathbf{S}^2\setminus \nu(\widehat{\lambda_{V}})\cong V$, we have 
\begin{align*}
V\setminus \nu(P)
\cong \mathbf{S}^{1}\times \mathbf{S}^2\setminus \nu(\widehat{P}\cup \widehat{Q})
\cong V^{\ast}\setminus \nu(Q). 
\end{align*}
This induces an orientation-preserving homeomorphism $f\colon V\setminus \nu(P)\rightarrow V^{\ast}\setminus \nu(Q)$ such that 
$f(\lambda_{V})=\lambda_{Q}$, $f(\lambda_{P})=\lambda_{V^{\ast}}$, $f(\mu_{V})=-\mu_{Q}$ and $f(\mu_{P})=-\mu_{V^{\ast}}$. 
Hence, we have $P^{\ast}=Q$. 
\par
Conversely, suppose that a pattern $P$ in a solid torus $V$ is dualizable. 
Then, there is an orientation-preserving homeomorphism $f\colon V\setminus \nu(P)\rightarrow V^{\ast}\setminus \nu(P^{\ast})$ such that 
$f(\lambda_{V})=\lambda_{P^{\ast}}$, $f(\lambda_{P})=\lambda_{V^{\ast}}$, $f(\mu_{V})=-\mu_{P^{\ast}}$ and $f(\mu_{P})=-\mu_{V^{\ast}}$. 
Let $V_1$ be a solid torus and $\mu_{V_{1}}$ be a meridian of $V_{1}$. 
Then, we have 
\begin{align*}
\mathbf{S}^{1}\times \mathbf{S}^2\setminus \nu(\widehat{P})
&\cong (V\setminus \nu(P))\cup_{\partial} V_{1}\\
&\cong (f(V\setminus \nu(P)))\cup_{\partial} V_{1}\\
&= (V^{\ast}\setminus \nu(P^{\ast}))\cup_{\partial} V_{1}\\
&=V^{\ast}, 
\end{align*}
where the first union is given by identifying $\mu_{V}$ with $\mu_{V_{1}}$, and the second and the third unions are given by identifying $-\mu_{P^{\ast}}$ with $\mu_{V_{1}}$.  
Hence, $\widehat{P}$ is a knot in $\mathbf{S}^{1}\times \mathbf{S}^2$ whose complement is a solid torus. 
By Waldhausen \cite{Waldhausen}, it is proved that all genus one Heegaard splittings of $\mathbf{S}^{1} \times \mathbf{S}^{2}$ are isotopic. 
Hence, $\widehat{P}$ is isotopic to $\pm \widehat{\lambda_{V}}$. 
Here, by the definition of $\lambda_{V}$, $P$ is homologous to a $m\lambda_{V}$ in $V\setminus \nu(P)$  for some positive $m\in\mathbf{Z}_{>0}$, and we see that $\widehat{P}$ is isotopic to $+\widehat{\lambda_{V}}$. 
\end{proof}
%
By Proposition~\ref{prop:MP}, we can draw the dual $P^{\ast}$ for a given dualizable pattern $P$ as in Figure~\ref{figure:dualizable} (see also \cite[Section~4]{Gompf-Miyazaki} and \cite[Figure~2]{Miller-Piccirillo}). 
%
%
\begin{figure}[h]
\centering
\includegraphics[scale=0.86]{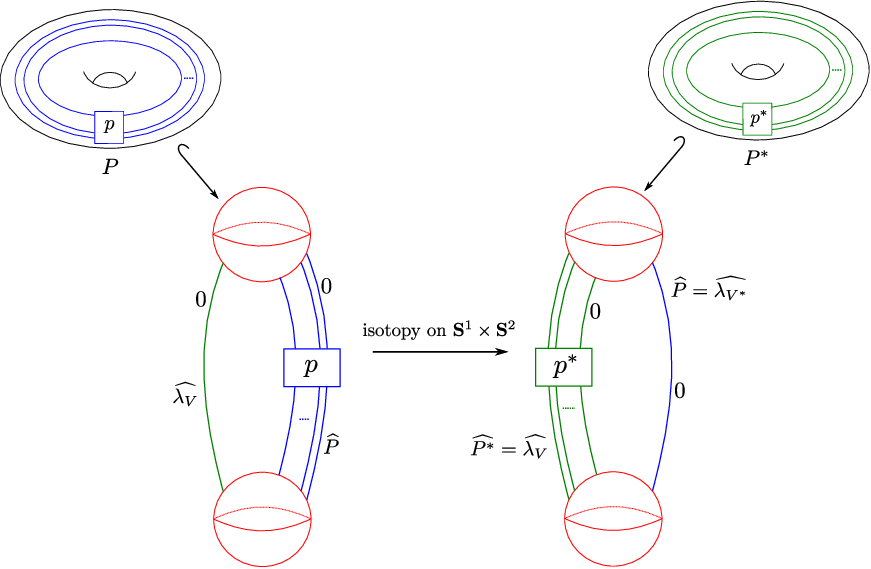}
\caption{(color online) A dualizable pattern $P$ and its dual $P^{\ast}$. 
The pairs of two balls represent $\mathbf{S}^{1}\times \mathbf{S}^{2}$. 
To clarify the parameters of the tori $V=\mathbf{S}^{1}\times \mathbf{S}^{2}\setminus \nu(\widehat{\lambda_{V}})$ and $V^{\ast}=\mathbf{S}^{1}\times \mathbf{S}^{2}\setminus \nu(\widehat{P})$, 
we draw the $0$-framings of $\widehat{\lambda_{V}}$ and $\widehat{\lambda_{V^{\ast}}}$. }
\label{figure:dualizable}
\end{figure}
\begin{rem}
Miller and Piccirillo \cite{Miller-Piccirillo} commented that Waldhausen \cite{Waldhausen} only proved the uniqueness of the genus one Heegaard splitting of $\mathbf{S}^{1} \times \mathbf{S}^{2}$ up to differmorphism and we require more work to prove the uniqueness up to isotopy. 
For a complete proof, for example, we use the following two facts (see also \cite{Carvalho-Oertel}). 
\begin{itemize}
\item For any orientable surface $F$, every Heegaard splitting of $F\times \mathbf{S}^{1}$ is a stabilization of the standard one \cite{Schultens}. 
\item Two stabilizations with the same genus of the same Heegaard splitting are isotopic (for example, see \cite{Johnson}). 
Namely, a stabilization of a Heegaard splitting is uniquely defined. 
\end{itemize}
\end{rem}
\begin{rem}
Baker and Motegi \cite{Baker-Motegi} gave another description of dualizable pattern as follows. 
Let $k\cup c$ be a two-component link in $\mathbf{S}^3$ such that $c$ is the unknot and that the $(0,0)$-surgery on $k\cup c$ results in $\mathbf{S}^3$. 
In the proof of \cite[Lemma~2.3]{Baker-Motegi}, Baker and Motegi proved that $k$ is isotopic to an $\mathbf{S}^{1}$-fiber in the standard product structure of $M_{c}(0)\cong \mathbf{S}^1\times \mathbf{S}^2$ by utilizing Gabai's work \cite[Corollary~8.3]{Gabai}. 
In particular, $k$ is isotopic to a meridian of $c$ in $M_{c}(0)$. 
By Proposition~\ref{prop:MP}, we see that $k\subset \mathbf{S}^3\setminus \nu(c)\cong \mathbf{S}^1\times D^{2}$ represents a dualizable pattern. 
Moreover, if we give the solid torus $\mathbf{S}^3\setminus \nu(c)\cong \mathbf{S}^1\times D^{2}$ a parameter in a standard way so that $k(U)=k$, then for the dual $k^{\ast}$, we see that $k^{\ast}(U)$ is the surgery dual to $c$ in the surgered $\mathbf{S}^3$. 
Conversely, let $P\colon \mathbf{S}^1\rightarrow V$ be a dualizable pattern and $\mu_{V}$ be a meridian of $V$. 
We regard $V$ as a standard solid torus in $\mathbf{S}^3$. 
Then the two-component link $P\cup \mu_{V}$ in $\mathbf{S}^3$ satisfies $M_{P\cup \mu(V)}(0,0)\cong \mathbf{S}^3$ since $\widehat{P}$ is isotopic to $\widehat{\lambda_{V}} $ in $M_{\mu_{V}}(0)\cong \mathbf{S}^1\times \mathbf{S}^2$. 
\end{rem}

\begin{rem}\label{rem:winding}
Let $P$ be a pattern. 
Then, $P$ is homologous to $m\lambda_{V}$ for some non-negative $m\in\mathbf{Z}$ in $V$. 
We call $m$ the {\it algebraic winding number} of $P$. 
The {\it geometric winding number} of $P$ is the minimal number of intersections between a meridian disk of $V$ and a pattern which is isotopic to $P$ in $V$. 
By Proposition~\ref{prop:MP}, we see that
\begin{itemize}
\item any geometric winding number one pattern is dualizable and its dual is itself (see Figure~\ref{figure:gwinding}, and see also \cite[Figure~2]{Gompf-Miyazaki} and \cite[Lemma~2.4]{Baker-Motegi}). 
\item the algebraic winding number of any dualizable pattern is one (see Figure~\ref{figure:awinding}). 
\end{itemize}
\end{rem}
\begin{figure}[h]
\centering
\includegraphics[scale=0.37]{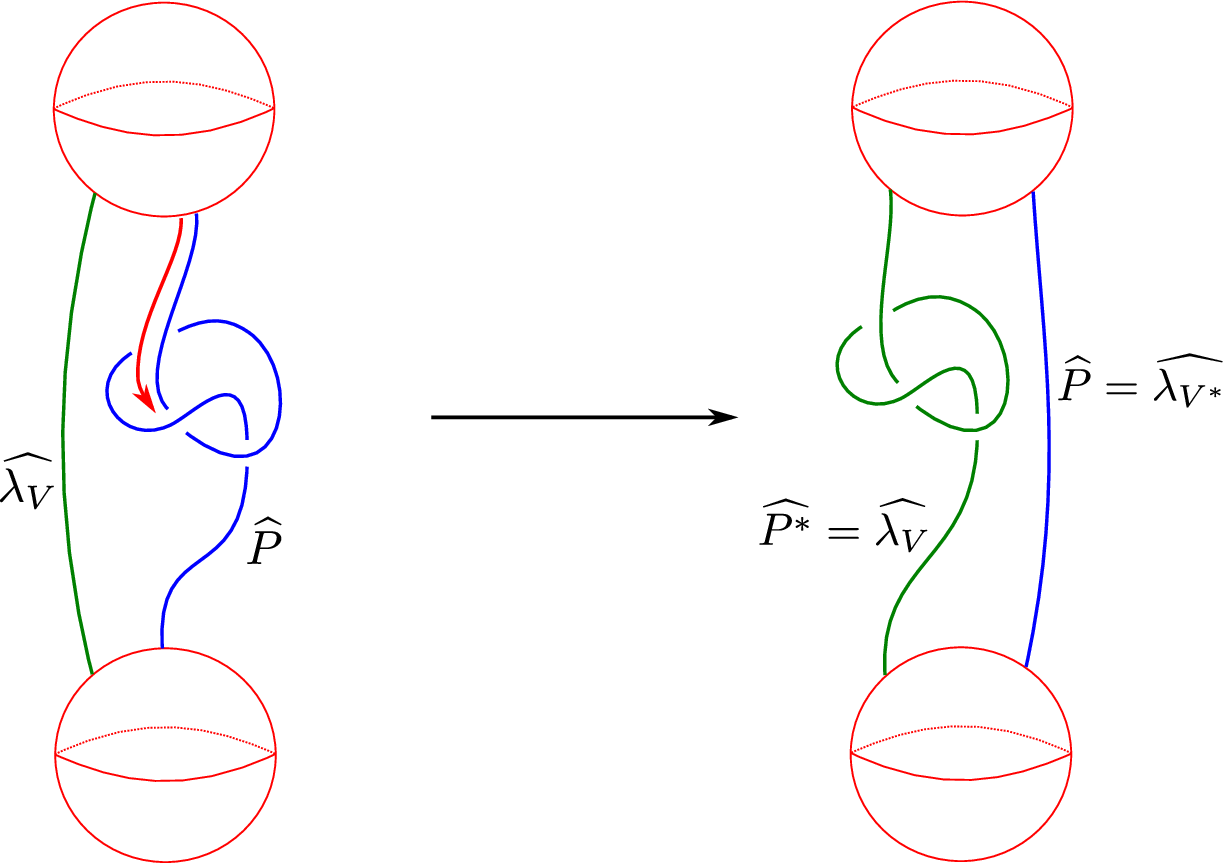}
\caption{(color online) 
The left picture is $\widehat{P}$ for a geometric winding number one pattern $P$. 
The pairs of two balls represent $\mathbf{S}^{1}\times \mathbf{S}^{2}$. 
}
\label{figure:gwinding}
\end{figure}
\begin{figure}[h]
\centering
\includegraphics[scale=0.5]{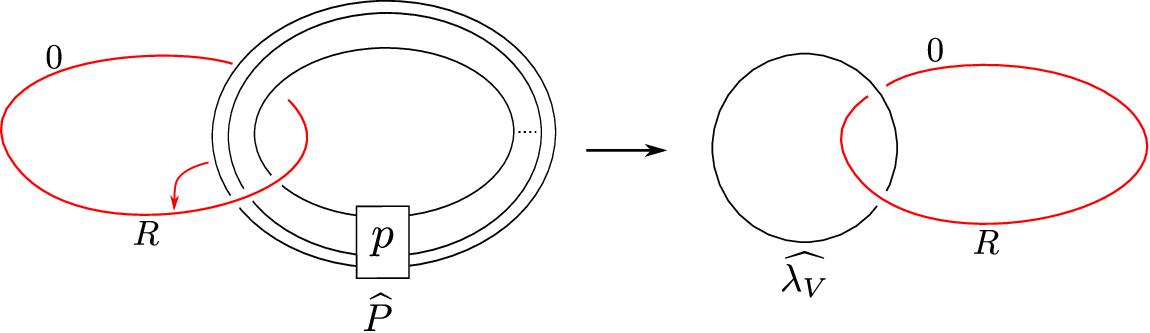}
\caption{(color online) A dualizable pattern $P$, and $\widehat{P}$. 
In this picture, $\mathbf{S}^{1}\times \mathbf{S}^{2}$ is represented by a knot $R$ with $0$-framing. 
Since $P$ is dualizable, $\widehat{P}$ is isotopic to $\widehat{\lambda_{V}}$ in $\mathbf{S}^{1}\times \mathbf{S}^{2}$. 
Such an isotopy is realized by isotopy on $\mathbf{S}^{3}$ and slidings over $R$. 
So, the algebraic winding number of $P$ is equal to the linking number of $R$ and $\widehat{\lambda_{V}}$. }
\label{figure:awinding}
\end{figure}
The following theorem is the most basic property of dualizable patterns. 
\begin{thm}[{e.g. \cite[Lemma~2.2]{Gompf-Miyazaki} and \cite[Theorem~2.8]{Miller-Piccirillo}}]\label{thm:dualizable-3dim}
If a pattern $P$ is dualizable, then 
there is an orientation-preserving homeomorphism $\phi\colon M_{P(U)}(0)\rightarrow  M_{P^{\ast}(U)}(0)$. 
\end{thm}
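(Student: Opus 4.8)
The plan is to realize both $M_{P(U)}(0)$ and $M_{P^{\ast}(U)}(0)$ as the manifold $V\setminus\nu(P)$ (transported by the dualizing homeomorphism $f$) with two solid tori Dehn-filled along its two boundary tori, and then to check that $f$ matches up the two filling slopes. This is a cut-and-paste argument in which all the content is careful bookkeeping of meridians and longitudes.

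First I would set up the standard decomposition of $\mathbf{S}^3$. Since $P(U)=\iota_{U}(P)$ lies in $V\cong\overline{\nu(U)}$ and $U$ is the unknot, the complement $W:=\mathbf{S}^3\setminus\nu(U)$ is a solid torus, and $\mathbf{S}^3=V\cup_{\partial}W$ with the meridian of $W$ glued to $\lambda_{V}$ and the longitude of $W$ glued to $\mu_{V}$; hence $\mathbf{S}^3\setminus\nu(P(U))=(V\setminus\nu(P))\cup_{\partial V}W$. The key preliminary observation is that the pattern longitude $\lambda_{P}$ is exactly the $0$-framed longitude of $P(U)$: indeed $\lambda_{P}$ is homologous to $m\lambda_{V}$ in $V\setminus\nu(P)$, while $\lambda_{V}$ bounds a disk in $W$, so $\lambda_{P}$ is null-homologous in $\mathbf{S}^3\setminus\nu(P(U))$. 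Consequently
\[
M_{P(U)}(0)=W\cup_{\partial V}(V\setminus\nu(P))\cup_{\partial\nu(P)}T,
\]
where $T$ is the surgery solid torus glued so that its meridian disk is bounded by $\lambda_{P}$.

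Next I would transport this decomposition through the orientation-preserving homeomorphism $f\colon V\setminus\nu(P)\to V^{\ast}\setminus\nu(P^{\ast})$ supplied by dualizability. The defining equalities $f(\mu_{V})=-\mu_{P^{\ast}}$, $f(\lambda_{V})=\lambda_{P^{\ast}}$, $f(\mu_{P})=-\mu_{V^{\ast}}$, $f(\lambda_{P})=\lambda_{V^{\ast}}$ show that $f$ interchanges the two boundary tori, sending $\partial V$ to $\partial\nu(P^{\ast})$ and $\partial\nu(P)$ to $\partial V^{\ast}$. Because a Dehn filling is determined by the unoriented slope along which the meridian disk is attached, I only need to follow the two filling slopes (the signs on the $\mu$'s are irrelevant here): the slope $\lambda_{V}$ of the $W$-filling is sent to $\lambda_{P^{\ast}}$, and the slope $\lambda_{P}$ of the $T$-filling is sent to $\lambda_{V^{\ast}}$. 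Thus $f$ carries $M_{P(U)}(0)$ to the manifold obtained from $V^{\ast}\setminus\nu(P^{\ast})$ by attaching one solid torus along $\lambda_{P^{\ast}}$ on $\partial\nu(P^{\ast})$ and another along $\lambda_{V^{\ast}}$ on $\partial V^{\ast}$.

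Finally I would identify this target with $M_{P^{\ast}(U)}(0)$. Filling $\partial\nu(P^{\ast})$ along $\lambda_{P^{\ast}}$ is precisely $0$-surgery on $P^{\ast}(U)$, by the same null-homology computation applied to $P^{\ast}$, whereas filling $\partial V^{\ast}$ along the longitude $\lambda_{V^{\ast}}$ reconstructs the standard embedding $V^{\ast}=\overline{\nu(U)}\subset\mathbf{S}^3$, i.e. it supplies the outer solid torus $\mathbf{S}^3\setminus\nu(U)$. Hence the transported manifold is exactly $M_{P^{\ast}(U)}(0)$, and since $f$ is orientation-preserving and extends over the matching solid-torus fillings in an orientation-preserving way, the resulting homeomorphism $\phi\colon M_{P(U)}(0)\to M_{P^{\ast}(U)}(0)$ preserves orientation. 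The step I expect to require the most care is the bookkeeping that simultaneously verifies $\lambda_{P}$ is the genuine $0$-framing of $P(U)$ in $\mathbf{S}^3$ and confirms that \emph{both} filling slopes match after applying $f$, since a sign error or a confusion between the roles of the two boundary tori would silently break the identification.
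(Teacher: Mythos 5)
Your proof is correct. The paper states Theorem~\ref{thm:dualizable-3dim} without proof, only citing Gompf--Miyazaki and Miller--Piccirillo, but your argument --- decomposing $M_{P(U)}(0)$ as $W\cup(V\setminus\nu(P))\cup T$ after checking that $\lambda_{P}$ is the $0$-framing, transporting the middle piece by the dualizing homeomorphism $f$, and matching the two filling slopes $\lambda_{V}\mapsto\lambda_{P^{\ast}}$ and $\lambda_{P}\mapsto\lambda_{V^{\ast}}$ --- is precisely the standard proof, and it is the same cut-and-paste bookkeeping the paper itself carries out for the knot complements in Equations~(\ref{eq1}) and (\ref{eq2}) of Section~\ref{sec:annulus-dualizable}.
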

%
Miller and Piccirillo \cite{Miller-Piccirillo} extended Theorem~\ref{thm:dualizable-3dim} as follows. 
\begin{thm}[e.g.{\cite[Theorem~3.1]{Miller-Piccirillo}}]\label{thm:dualizable-4dim}
Let $P$ be a dualizable pattern. 
Then the homeomorphism $\phi\colon M_{P(U)}(0)\rightarrow  M_{P^{\ast}(U)}(0)$ in Theorem~\ref{thm:dualizable-3dim} extends to an orientation-preserving diffeomorphism $\Phi\colon X_{P(U)}(0)\rightarrow X_{P^{\ast}(U)}(0)$. 
\end{thm}
%

\subsection{From special annulus presentations to dualizable patterns}\label{sec:annulus-dualizable}
Miller and Piccirillo \cite[Section~5]{Miller-Piccirillo} constructed a dualizable pattern from a special annulus presentation. 
Here, we introduce their construction. 
\par
Let $K\subset \mathbf{S}^3$ be a knot with a special annulus presentation $(A, b)$ with $\partial A=c_1\cup c_2$. 
Recall that $A$ is a Hopf band since $(A,b)$ is special. 
Now, we consider the knot $A^{\pm 1}(K)$. 
In Figures~\ref{figure:annulus-dualizable1-2} and \ref{figure:annulus-dualizable3-4}, the left blue knots represent $K$, and each right black knot represents $A^{\pm1}(K)$ for the corresponding left $K$. 
More precisely, 
\begin{itemize}
\item in Figure~\ref{figure:annulus-dualizable1-2} $(1)$, $A$ is $-1$ twisted and the black knot is $A(K)$, 
\item in Figure~\ref{figure:annulus-dualizable1-2} $(2)$, $A$ is $-1$ twisted and the black knot is $A^{-1}(K)$, 
\item in Figure~\ref{figure:annulus-dualizable3-4} $(3)$, $A$ is $+1$ twisted and the black knot is $A^{-1}(K)$, 
\item in Figure~\ref{figure:annulus-dualizable3-4} $(4)$, $A$ is $+1$ twisted and the black knot is $A(K)$, 
\end{itemize} 
for each left $K$. 
Then, for each case, take a red curve $\beta \subset \mathbf{S}^{3}\setminus \nu(K)$ as in Figures~\ref{figure:annulus-dualizable1-2} and \ref{figure:annulus-dualizable3-4}. 
\begin{figure}[h]
\centering
\includegraphics[scale=0.73]{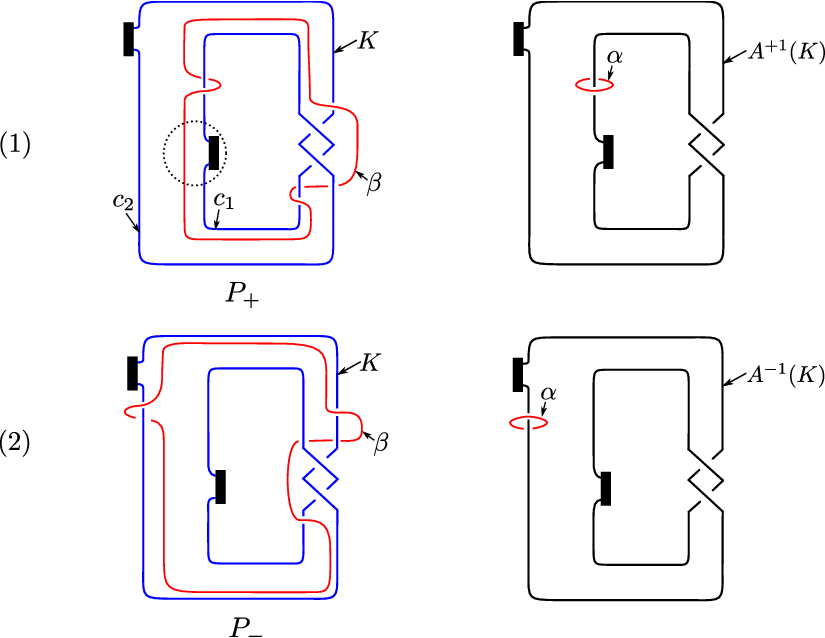}
\caption{(color online) From a special annulus presentation $(A,b)$ of a knot $K$ (the left blue knots) to dualizable patterns $P_{+}$ (top) and $P_{-}$ (bottom) given by $K\subset \mathbf{S}^{3}\setminus \nu(\beta)$. 
Here, the Hopf band $A$ is $-1$ twisted. 
The red curves $\beta$ run near $c_1$ for $(1)$ and near $c_2$ for $(2)$. 
Precisely, for $(1)$, there is a sufficiently thin annulus bounded by $\beta$ and $c_1$ such that the thin annulus does not intersect the band $b$ except the attaching regions and the thin annulus is on $A$ near the attaching region on $c_1$ (circled by the dotted circle). 
The right black knots represent $A^{+1}(K)$ for $(1)$ and $A^{-1}(K)$ for $(2)$. 
}\label{figure:annulus-dualizable1-2}
\end{figure}
\begin{figure}[h]
\centering
\includegraphics[scale=0.73]{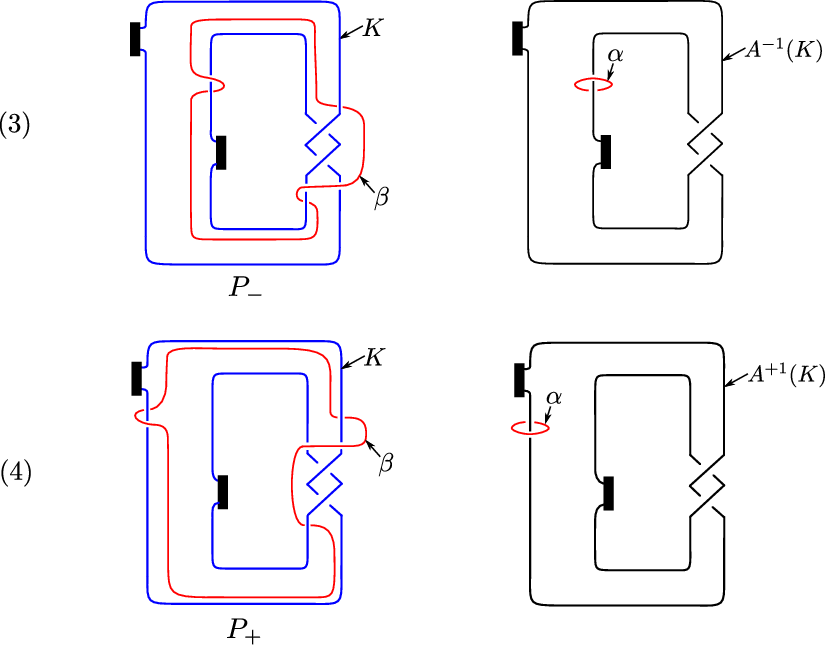}
\caption{(continuation of Figure~\ref{figure:annulus-dualizable1-2}) 
Here, the Hopf band $A$ is $+1$ twisted. 
The red curves $\beta$ run near $c_1$ for $(3)$ and near $c_2$ for $(4)$. 
The right black knots represent $A^{-1}(K)$ for $(3)$ and $A^{+1}(K)$ for $(4)$. 
}\label{figure:annulus-dualizable3-4}
\end{figure}
Note that 
the homeomorphism given in Figure~\ref{figure:Osoinach-homeo} induces a homeomorphism $\phi_{\pm1}\colon (M_{K}(0), \beta)\rightarrow (M_{A^{\pm1}(K)}(0), \alpha)$, where $\alpha \subset \mathbf{S}^{3}\setminus \nu(A^{\pm1}(K))$ is a meridian of $A^{\pm1}(K)$ and we regard $\beta$ and $\alpha$ as curves in $M_{K}(0)$ and $M_{A^{\pm1}(K)}(0)$, respectively. 
%
%
Note also that $V=\mathbf{S}^3\setminus \nu(\beta)$ is homeomorphic to a solid torus. 
Let $P_{\pm}$ be the pattern given by $K\subset V$ 
as the left pictures of $A^{\pm1}(K)$ in Figures~\ref{figure:annulus-dualizable1-2} and \ref{figure:annulus-dualizable3-4}, where we give a parameter of $V$ by regarding $V$ as a solid torus in a standard way so that $P_{\pm}(U)=K$. 
We give an orientation of $P_{\pm}$ arbitrarily. 
Then, we can prove that the patterns $P_{\pm}\subset V$ are dualizable as follows. 
Here, we only consider the case of Figure~\ref{figure:annulus-dualizable1-2} $(1)$. 
Similarly, for other three cases, we can prove that the patterns are dualizable. 
Firstly, draw $\widehat{P_{+}}$ in $M_{\beta}(0)=\mathbf{S}^1\times \mathbf{S}^2$ as in the first picture of Figure~\ref{figure:dualizable-isotopy}. 
Secondly, slide $\widehat{P_{+}}$ over the $0$-framed knot $\beta$ along the black arrow in the first picture of Figure~\ref{figure:dualizable-isotopy}. 
Then, we see that $\widehat{P_{+}}$ is isotopic to $\widehat{\lambda_{V}}$ in $M_{\beta}(0)$. 
Hence, by Proposition~\ref{prop:MP}, $P_{+}$ is dualizable and its dual $P_{+}^{\ast}$ is presented by the green curve $\widehat{\lambda_{V}}$ in $V^{\ast}=M_{\beta}(0)\setminus \nu(\widehat{P}_{+})$ in Figure~\ref{figure:dualizable-isotopy}. 
\begin{figure}[h]
\centering
\includegraphics[scale=0.75]{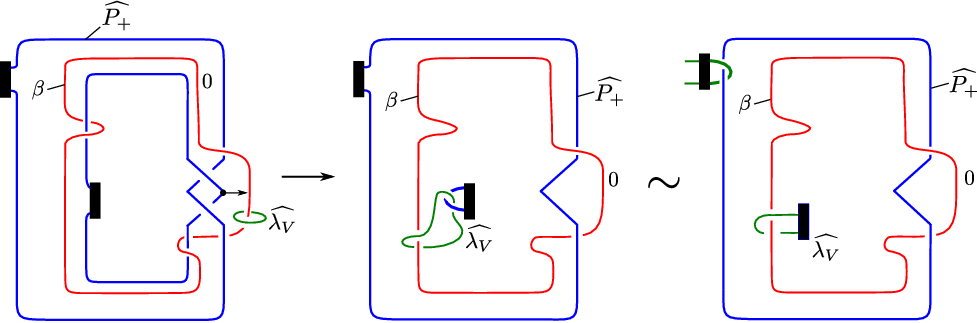}
\caption{(color online) By sliding $\widehat{P_{+}}$ over the $0$-framed knot $\beta$ (red curve) along the black arrow (in the left picture), we see that $\widehat{P_{+}}$ is isotopic to $\widehat{\lambda_{V}}$ in $\mathbf{S}^1\times \mathbf{S}^3$ (right picture). 
}\label{figure:dualizable-isotopy}
\end{figure}
\par
Obviously, $P_{\pm}(U)=K$ (as unoriented knots)
\footnote{
Miller and Piccirillo \cite[Section~5]{Miller-Piccirillo} took $\alpha$ as a meridian of $K$. So their dualizable patterns satisfy $P^{\ast}(U)=K$. 
On the other hand, our dualizable patterns satisfy $P_{\pm}(U)=K$. }. 
Miller and Piccirillo's proof \cite[Section~5]{Miller-Piccirillo} implies that $P_{+}^{\ast}(U)=A^{+1}(K)$ and $P_{-}^{\ast}(U)=A^{-1}(K)$ (as unoriented knots). 
For the sake of completeness, we introduce the proof. 
%
%
\par
As mentioned above, there is an orientation-preserving homeomorphism 
\[
\phi_{\pm1}\colon (M_{K}(0), \beta)\rightarrow (M_{A^{\pm1}(K)}(0), \alpha).
\]
Let $L \subset M_{A^{\pm1}(K)}(0)$ be the surgery dual to $A^{\pm1}(K)$. 
Since $\alpha$ is isotopic to $L$ in $M_{A^{\pm1}(K)}(0)$, we have 
\begin{align}
\mathbf{S}^{3}\setminus \nu(A^{\pm1}(K)) \label{eq1}
&\cong M_{A^{\pm1}(K)}(0)\setminus \nu(L)\\ \nonumber
&\cong M_{A^{\pm1}(K)}(0)\setminus \nu(\alpha)\\ \nonumber
&\cong \phi_{\pm1}^{-1}(M_{A^{\pm1}(K)}(0))\setminus \phi_{\pm1}^{-1}(\nu(\alpha))\\\nonumber
&= M_{K}(0)\setminus \nu(\beta)\\\nonumber
&\cong ((\mathbf{S}^3\setminus \nu(\beta))\setminus \nu(P_{\pm}))\cup_{\partial} (\mathbf{S}^{1}\times D^{2}), 
\end{align}
where the last union is given by identifying the $\lambda_{P_{\pm}}$ with a meridian of $\mathbf{S}^{1}\times D^{2}$. 
Recall that $V=\mathbf{S}^3\setminus \nu(\beta)$. 
Moreover, since $P_{\pm}$ is dualizable there is an orientation-preserving homeomorphism $f\colon V\setminus \nu(P_{\pm})\rightarrow V^{\ast}\setminus \nu(P_{\pm}^{\ast})$ such that $f(\lambda_{P_{\pm}})=\lambda_{V^{\ast}}$. 
Hence we obtain
\begin{align}
\mathbf{S}^{3}\setminus \nu(A^{\pm1}(K)) \label{eq2}
&\cong ((\mathbf{S}^3\setminus \nu(\beta))\setminus \nu(P_{\pm}))\cup_{\partial} (\mathbf{S}^{1}\times D^{2})\\ \nonumber
&=(V\setminus \nu(P_{\pm}))\cup_{\partial} (\mathbf{S}^{1}\times D^{2})\\ \nonumber
&\cong (V^{\ast}\setminus \nu(P_{\pm}^{\ast}))\cup_{\partial} (\mathbf{S}^{1}\times D^{2})\\ \nonumber
&\cong \mathbf{S}^{3}\setminus \nu(P_{\pm}^{\ast}(U)), 
\end{align}
where the last union is given by identifying $\lambda_{V^{\ast}}$ with a meridian of $\mathbf{S}^{1}\times D^{2}$. 
Finally, by the Knot Complement Theorem \cite{Gordon-Luecke}, we see that $A^{\pm1}(K)=P_{\pm}^{\ast}(U)$. 
As a consequence, we obtain Proposition~\ref{prop:Miller-Piccirillo}. 
%
\begin{prop}[{e.g. \cite[Proposition~5.3]{Miller-Piccirillo}}]\label{prop:Miller-Piccirillo}
Let $K$ be a knot with a special annulus presentation $(A,b)$. 
Then, there are dualizable patterns $P_{+}$ and $P_{-}$ such that $P_{\pm}(U)=K$ and $P_{\pm}^{\ast}(U)=A^{\pm 1}(K)$. 
In particular, such $P_{\pm}$ are drawn as in Figures~\ref{figure:annulus-dualizable1-2} and \ref{figure:annulus-dualizable3-4}. 
\end{prop}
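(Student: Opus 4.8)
The plan is to turn the four pictures in Figures~\ref{figure:annulus-dualizable1-2} and \ref{figure:annulus-dualizable3-4} into honest dualizable patterns and then to identify their duals with $A^{\pm 1}(K)$. First I would fix the red curve $\beta \subset \mathbf{S}^3 \setminus \nu(K)$ running beside $c_1$ (for $P_+$) or beside $c_2$ (for $P_-$) and set $V = \mathbf{S}^3 \setminus \nu(\beta)$. Since $\beta$ is unknotted, $V$ is a solid torus; giving it the standard parametrization exhibits $K \subset V$ as a pattern $P_{\pm}$ with $P_{\pm}(U) = K$ by construction, and because $K$ is unoriented I may orient $P_{\pm}$ arbitrarily.

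Second, I would verify dualizability using Proposition~\ref{prop:MP}. For this I pass to $M_{\beta}(0) \cong \mathbf{S}^1 \times \mathbf{S}^2$, draw $\widehat{P_{\pm}}$ there as in Figure~\ref{figure:dualizable-isotopy}, and perform a single handle slide of $\widehat{P_{\pm}}$ over the $0$-framed $\beta$. After this slide $\widehat{P_{\pm}}$ becomes isotopic to $\widehat{\lambda_{V}}$, so Proposition~\ref{prop:MP} gives that $P_{\pm}$ is dualizable; the same picture exhibits the dual $P_{\pm}^{\ast}$ as the curve $\widehat{\lambda_{V}}$ sitting inside $V^{\ast} = M_{\beta}(0) \setminus \nu(\widehat{P_{\pm}})$. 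I would record this computation only for case $(1)$ of Figure~\ref{figure:annulus-dualizable1-2}; the remaining three cases differ only by mirroring and orientation conventions, and the identical slide settles them.

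The third and principal step is to show $P_{\pm}^{\ast}(U) = A^{\pm 1}(K)$. Here I would invoke the Osoinach--Teragaito homeomorphism of Theorem~\ref{thm:Osoinach}, which in its pictorial form (Figure~\ref{figure:Osoinach-homeo}) upgrades to a homeomorphism of pairs $\phi_{\pm 1} \colon (M_{K}(0), \beta) \to (M_{A^{\pm 1}(K)}(0), \alpha)$, where $\alpha$ is a meridian of $A^{\pm 1}(K)$. Writing $L$ for the surgery dual to $A^{\pm 1}(K)$, which is isotopic to $\alpha$, I would run the chain of homeomorphisms in \eqref{eq1} from $\mathbf{S}^3 \setminus \nu(A^{\pm 1}(K))$ through $\phi_{\pm 1}^{-1}$ down to $M_{K}(0) \setminus \nu(\beta)$, and rewrite the latter as $(V \setminus \nu(P_{\pm})) \cup_{\partial} (\mathbf{S}^1 \times D^2)$ with $\lambda_{P_{\pm}}$ glued to a meridian. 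Feeding in the dualizing homeomorphism $f \colon V \setminus \nu(P_{\pm}) \to V^{\ast} \setminus \nu(P_{\pm}^{\ast})$ with $f(\lambda_{P_{\pm}}) = \lambda_{V^{\ast}}$, as in \eqref{eq2}, turns the gluing into the one identifying $\lambda_{V^{\ast}}$ with a meridian, so the result is $\mathbf{S}^3 \setminus \nu(P_{\pm}^{\ast}(U))$. This yields $\mathbf{S}^3 \setminus \nu(A^{\pm 1}(K)) \cong \mathbf{S}^3 \setminus \nu(P_{\pm}^{\ast}(U))$, and a final appeal to the Knot Complement Theorem \cite{Gordon-Luecke} upgrades this homeomorphism of complements to the equality $A^{\pm 1}(K) = P_{\pm}^{\ast}(U)$.

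I expect the delicate point to be the bookkeeping in the third step: keeping straight, at each stage of the chains \eqref{eq1} and \eqref{eq2}, which boundary curve ($\lambda_{P_{\pm}}$ or $\lambda_{V^{\ast}}$) is identified with the meridian of the filling solid torus, and checking that the isotopy $\alpha \simeq L$ together with $\phi_{\pm 1}$ respects these identifications. A slip here would glue in the wrong solid torus and produce the wrong satellite, so the correctness of the whole argument rests on tracking these longitudes and meridians precisely.
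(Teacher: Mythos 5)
Your proposal follows the paper's own argument essentially step for step: the same choice of $\beta$ and $V=\mathbf{S}^3\setminus\nu(\beta)$, the same handle-slide verification of dualizability via Proposition~\ref{prop:MP}, and the same chain of homeomorphisms through the Osoinach--Teragaito pair homeomorphism $\phi_{\pm1}$ and the Knot Complement Theorem to identify $P_{\pm}^{\ast}(U)$ with $A^{\pm1}(K)$. It is correct and matches the paper's proof.
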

In \cite{tagami7}, the author gave the explicit formula of $P^{\ast}_{\pm}$ for the dualizable pattern $P_{\pm}$ in Proposition~\ref{prop:Miller-Piccirillo}. 
\par
By Proposition~\ref{prop:Miller-Piccirillo}, we can regard Theorem~\ref{thm:dualizable-4dim} as an extension of Theorem~\ref{thm:AJOT}. 

%
\subsection{Oriented case}\label{sec:annulus-dualizable-oriented}
Let $K$ be an oriented knot with a special annulus presentation $(A,b)$. 
The orientation of $K$ induces orientations of $A^{\pm1}(K)$ naturally. 
We can also give the orientations induced by $K$ to 
the dualizable patterns $P_{\pm}$ constructed from $K$ as in Section~\ref{sec:annulus-dualizable} (see also Figures~\ref{figure:annulus-dualizable1-2} and \ref{figure:annulus-dualizable3-4}). 
Moreover, their duals $P^{\ast}_{\pm}$ are also oriented by the orientations of $P_{\pm}$.  
Then, we can check that $P^{\ast}_{\pm}(U)=-A^{\pm1}(K)$ as oriented knots as follows. 
Let $l$ be a longitude of $A^{\pm1}(K)$ with $\operatorname{lk}(l, A^{\pm1}(K))=0$. 
For convenience, orient $\beta$ depicted in Figures~\ref{figure:annulus-dualizable1-2} and \ref{figure:annulus-dualizable3-4} so that $\operatorname{lk}(\beta, K)=1$. 
Let $\mu_{\beta}$ be a meridian of $\beta$ and orient $\mu_{\beta}$ so that $\operatorname{lk}(\beta, \mu_{\beta})=1$. 
Then, we see that the composition of homeomorphisms $\mathbf{S}^{3}\setminus \nu(A^{\pm1}(K))\cong M_{A^{\pm1}(K)}(0)\setminus \nu(\alpha) \cong M_{K}(0)\setminus \nu(\beta)$ given in Equation (\ref{eq1}) sends $l$ to $\lambda_{P_{\pm}}-\mu_{\beta}$ (see Figure~\ref{figure:orientation}). 
Here, $\lambda_{P_{\pm}}$ bounds a disk in $M_{K}(0)\setminus \nu(\beta)$, and $\mu_{\beta}$ is $\lambda_{V}$ under the identification $V=\mathbf{S}^3\setminus \nu(\beta)$. 
Hence, $l$ is sent to $-\lambda_{V} $. 
Moreover, the homeomorphism $(V\setminus \nu(P_{\pm}))\cup_{\partial} (\mathbf{S}^{1}\times D^{2})\cong (V^{\ast}\setminus \nu(P_{\pm}^{\ast}))\cup_{\partial} (\mathbf{S}^{1}\times D^{2})$ in Equation~(\ref{eq2}) sends $\lambda_{V}$ to $\lambda_{P_{\pm}^{\ast}}$. 
As a consequence, $l$ is sent to a longitude of $P_{\pm}^{\ast}(U)$ with reversed orientation under the homeomorphism $\mathbf{S}^{3}\setminus \nu(A^{\pm1}(K))\cong \mathbf{S}^{3}\setminus \nu(P_{\pm}^{\ast}(U))$ in Equation~(\ref{eq2}), and we obtain the following. 
\begin{prop}[{the oriented version of Proposition~\ref{prop:Miller-Piccirillo}}]\label{prop:Miller-Piccirillo-oriented}
Let $K$ be an oriented knot with a special annulus presentation $(A,b)$. 
Give $A^{\pm1}(K)$ the orientation induced by $K$. 
Then, there are dualizable patterns $P_{+}$ and $P_{-}$ such that $P_{\pm}(U)=K$ and $P_{\pm}^{\ast}(U)=-A^{\pm 1}(K)$ as oriented knots. 
In particular, such $P_{\pm}$ are drawn as in Figures~\ref{figure:annulus-dualizable1-2} and \ref{figure:annulus-dualizable3-4}, where the orientations of $P_{\pm}$ are induced by $K$ naturally. 
\end{prop}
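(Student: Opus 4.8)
The plan is to build on the unoriented Proposition~\ref{prop:Miller-Piccirillo}, which already supplies dualizable patterns $P_{\pm}$ with $P_{\pm}(U)=K$ and $P_{\pm}^{\ast}(U)=A^{\pm1}(K)$ as \emph{unoriented} knots, and to promote the second identity to an oriented one by bookkeeping a single oriented curve through the chain of homeomorphisms (\ref{eq1}) and (\ref{eq2}). The identity $P_{\pm}(U)=K$ as oriented knots is immediate: by construction $P_{\pm}$ is the oriented knot $K$ sitting inside the solid torus $V=\mathbf{S}^{3}\setminus\nu(\beta)$, and re-embedding $V$ in the standard way returns $K$ with its orientation. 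So the whole content is the sign in $P_{\pm}^{\ast}(U)=\pm A^{\pm1}(K)$.

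First I would recall why tracking the preferred longitude decides the sign. The composite of (\ref{eq1}) and (\ref{eq2}) is an orientation-preserving homeomorphism $\Psi\colon \mathbf{S}^{3}\setminus\nu(A^{\pm1}(K))\to \mathbf{S}^{3}\setminus\nu(P_{\pm}^{\ast}(U))$; by the Knot Complement Theorem \cite{Gordon-Luecke} it extends to an orientation-preserving homeomorphism of $\mathbf{S}^{3}$ carrying $A^{\pm1}(K)$ to $P_{\pm}^{\ast}(U)$. Such a homeomorphism preserves the knot orientation precisely when it sends the oriented $0$-framed longitude of the source to the positively oriented $0$-framed longitude of the target, and reverses it otherwise. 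Thus it suffices to compute $\Psi(l)$ for the oriented longitude $l$ of $A^{\pm1}(K)$ with $\operatorname{lk}(l,A^{\pm1}(K))=0$.

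The core computation is to follow $l$ through (\ref{eq1}). After fixing orientations by demanding $\operatorname{lk}(\beta,K)=1$ and $\operatorname{lk}(\beta,\mu_{\beta})=1$, I would read off from the explicit Osoinach--Teragaito homeomorphism $\phi_{\pm1}$ of Figure~\ref{figure:Osoinach-homeo} (drawn concretely in Figure~\ref{figure:orientation}) that $l$ is carried to $\lambda_{P_{\pm}}-\mu_{\beta}$. Since $\lambda_{P_{\pm}}$ is the $0$-framed longitude of $K=P_{\pm}(U)$, it bounds the meridian disk of the solid torus glued in to form $M_{K}(0)$ and is therefore null-homologous in $M_{K}(0)\setminus\nu(\beta)$; and $\mu_{\beta}$ is exactly $\lambda_{V}$ under the identification $V=\mathbf{S}^{3}\setminus\nu(\beta)$. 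Hence $l$ is sent to $-\lambda_{V}$. Finally, through (\ref{eq2}) the dualizability homeomorphism $f$ satisfies $f(\lambda_{V})=\lambda_{P_{\pm}^{\ast}}$ by the definition of a dual pattern, and $\lambda_{P_{\pm}^{\ast}}$ is the preferred longitude of $P_{\pm}^{\ast}(U)$ with the orientation induced from $\lambda_{V}$. Therefore $\Psi(l)=-\lambda_{P_{\pm}^{\ast}}$, i.e.\ $\Psi$ reverses the oriented longitude, and we conclude $P_{\pm}^{\ast}(U)=-A^{\pm1}(K)$ as oriented knots.

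The main obstacle, and the only genuinely geometric step, is the first claim of the previous paragraph: that $\phi_{\pm1}$ sends $l$ to $\lambda_{P_{\pm}}-\mu_{\beta}$. This is an orientation-sensitive computation that has to be extracted from the explicit form of the Osoinach--Teragaito homeomorphism rather than from homological formalism alone, since both orientations of a longitude are null-homologous in the complement and so cannot be distinguished by $H_{1}$ of the complement by itself; it is precisely what Figure~\ref{figure:orientation} is meant to verify. Everything downstream (the vanishing of $[\lambda_{P_{\pm}}]$, the identification $\mu_{\beta}=\lambda_{V}$, and $f(\lambda_{V})=\lambda_{P_{\pm}^{\ast}}$) is then formal, following the definition of a dualizable pattern and the unoriented argument already established.
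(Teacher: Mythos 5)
Your proposal is correct and follows essentially the same route as the paper: orient $\beta$ and $\mu_{\beta}$ by $\operatorname{lk}(\beta,K)=\operatorname{lk}(\beta,\mu_{\beta})=1$, track the $0$-framed longitude $l$ of $A^{\pm1}(K)$ through the homeomorphisms of Equations~(\ref{eq1}) and (\ref{eq2}) to get $l\mapsto \lambda_{P_{\pm}}-\mu_{\beta}\mapsto -\lambda_{V}\mapsto -\lambda_{P_{\pm}^{\ast}}$, and conclude $P_{\pm}^{\ast}(U)=-A^{\pm1}(K)$. Your added remarks (that the sign is detected by the image of the oriented longitude, and that the step $l\mapsto\lambda_{P_{\pm}}-\mu_{\beta}$ is the genuinely geometric input read off from Figure~\ref{figure:orientation}) only make explicit what the paper leaves implicit.
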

%
%
%
\begin{figure}[h]
\centering
\includegraphics[scale=0.9]{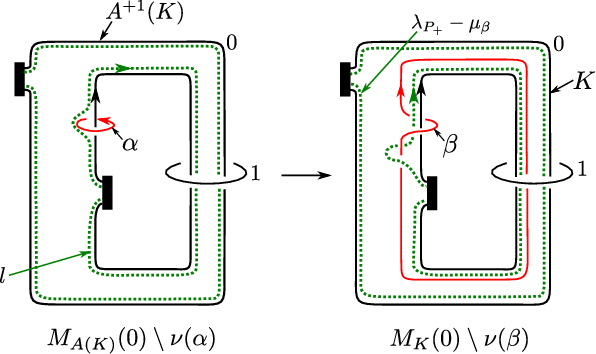}
\caption{(color online) A longitude $l$ of $A(K)$ in $\mathbf{S}^{3}\setminus\nu(A(K))\cong M_{A(K)}(0)\setminus \nu(\alpha)$ (the dotted green loop in the left). 
Here, we consider the case $A$ is $-1$ twisted.  
The longitude $l$ is sent to $\lambda_{P_{+}}-\mu_{\beta}$ under the homeomorphism $\mathbf{S}^{3}\setminus \nu(A^{\pm1}(K)) \cong M_{K}(0)\setminus \nu(\beta)$. 
}\label{figure:orientation}
\end{figure}
%
%
%
\section{Piccirillo's RGB-diagram}\label{sec:RGB}
In this section, we recall Piccirillo's work \cite{Piccirillo} and describe a relation between the work and a special annulus presentation. 
%
\subsection{RGB-diagram}\label{sec:RGB-diagram}
Let $L$ be a Kirby diagram which consists of one $1$-handle $R$ represented by a dotted unknotted circle and two $2$-handles $G$ and $B$ represented by $0$-framed knots. 
Suppose that $R$, $G$ and $B$ satisfying the following: 
\begin{itemize}
\item $G\cup R$ is isotopic to $G\cup \mu_{G}$ in $\mathbf{S}^3$, where $\mu_{G}$ is a meridian of $G$, 
\item $B\cup R$ is isotopic to $B\cup \mu_{B}$ in $\mathbf{S}^3$, where $\mu_{B}$ is a meridian of $B$, and 
\item the linking number $\operatorname{lk}(G,B)$ of $G$ and $B$ is zero. 
\end{itemize}
Then, we call $L$ an {\it RGB-diagram}. 
For example, see the bottom right picture in Figure~\ref{figure:ex} and 
\cite{Piccirillo} where $R$, $G$ and $B$ are drawn as red, green and blue curves, respectively. 
For an RGB-diagram $L=R\cup G\cup B$, we can construct two knots $K_{G}$ and $K_{B}$ as follows. 
Let $D_{R}$ be a disk bounded by $R$. 
Since $G\cup R$ is isotopic to $G\cup \mu_{G}$ in $\mathbf{S}^3$, we can isotope $L$ so that $G\cap D_{R}$ is a single point. 
Then, slide $B$ over $G$ as needed to remove the all points in $B\cap D_{R}$, and denote the resulting knot (obtained from $B$) by $K_{B}$. 
Note that the framing of $K_{B}$ is $0$ because the framings of $B$ and $G$ are zero and $\operatorname{lk}(G,B)$ is zero. 
Moreover, after the slide, we can cancel the $1$-handle and the $2$-handle represented by $R$ and $G$, respectively. 
Hence, we have $X_{L}\cong X_{K_{B}}(0)$. 
Reversing the roles of $G$ and $B$, we obtain $K_{G}$ and $X_{L}\cong X_{K_{G}}(0)$. 
As a consequence, we obtain the following. 
\begin{thm}[{\cite[Theorem~2.1]{Piccirillo}}]\label{thm:Piccirillo}
Let $L$ be an RGB-diagram and $K_{G}$ and $K_{B}$ be as above. 
Then we have $X_{L}\cong X_{K_{G}}(0)\cong X_{K_{B}}(0)$. 
\end{thm}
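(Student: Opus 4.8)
The plan is to prove the statement entirely by Kirby calculus, converting the three defining conditions of an RGB-diagram into geometric and arithmetic facts that allow the $1$-handle to be cancelled against one of the two $2$-handles. By the symmetry of the hypotheses in $G$ and $B$, it suffices to establish $X_{L}\cong X_{K_{B}}(0)$; the identity $X_{L}\cong X_{K_{G}}(0)$ then follows by interchanging the roles of $G$ and $B$.

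First I would fix a disk $D_{R}$ bounded by the dotted circle $R$ and use the hypothesis that $G\cup R$ is isotopic to $G\cup\mu_{G}$ to isotope the whole diagram so that $G$ meets $D_{R}$ in a single transverse point; equivalently, $G$ becomes a meridian of the $1$-handle $R$. This is the step that makes $(R,G)$ a geometrically canceling $1$-handle/$2$-handle pair, and it is legitimate because an isotopy of framed links induces an orientation-preserving diffeomorphism of the corresponding $4$-manifolds.

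Next I would remove the intersection points of $B$ with $D_{R}$ by sliding $B$ over $G$. Since $G$ meets $D_{R}$ in exactly one point, each intersection of $B$ with $D_{R}$ can be cancelled by a single handle slide of $B$ across $G$. The crucial bookkeeping is the framing: a slide of $B$ over $G$ changes the framing of $B$ to $\operatorname{fr}(B)+\operatorname{fr}(G)\pm 2\operatorname{lk}(B,G)$, and since $\operatorname{fr}(B)=\operatorname{fr}(G)=0$ and $\operatorname{lk}(G,B)=0$ by hypothesis, the framing of $B$ remains $0$ after every slide. I expect this framing computation to be the main (though routine) obstacle, as one must check that the algebraic count of intersections and the arithmetic of framings are simultaneously controlled by the single condition $\operatorname{lk}(G,B)=0$.

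Finally, once $B$ is disjoint from $D_{R}$, the $1$-handle $R$ and the $2$-handle $G$ form a canceling pair and may be removed, leaving a single $0$-framed $2$-handle attached along the knot obtained from $B$, which is $K_{B}$ by definition. Hence $X_{L}\cong X_{K_{B}}(0)$, the diffeomorphism being orientation-preserving since every move used (isotopy, handle slide, handle cancellation) preserves orientation. Swapping $G$ and $B$ throughout this argument, which is permitted because the defining conditions of an RGB-diagram are symmetric in $G$ and $B$, yields $X_{L}\cong X_{K_{G}}(0)$ and completes the proof.
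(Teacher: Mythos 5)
Your proposal is correct and follows essentially the same route as the paper: isotope $G$ to meet $D_{R}$ in one point, slide $B$ over $G$ to clear $B\cap D_{R}$ while the hypotheses $\operatorname{fr}(G)=\operatorname{fr}(B)=0$ and $\operatorname{lk}(G,B)=0$ keep the framing at $0$, cancel the pair $(R,G)$, and then swap the roles of $G$ and $B$. This is exactly the construction of $K_{B}$ and $K_{G}$ given in the paper, of which the theorem is stated as an immediate consequence.
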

\begin{rem}
Piccirillo \cite{Piccirillo} denotes $K_{B}$ by $K$ and $K_{G}$ by $K'$. 
\end{rem}
Piccirillo \cite[Proposition~4.2]{Piccirillo} explained a relation between dualizable patterns and RGB-diagrams as follows. 
\begin{prop}[{\cite[Proposition~4.2]{Piccirillo}}]\label{prop:Piccirillo}
For any RGB-diagram $L=R\cup G \cup B$, there exists a dualizable pattern $P$ such that $P(U)=K_{B}$ and $P^{\ast}(U)=K_{G}$. 
Conversely, for any dualizable pattern $P$, there exists an RGB-diagram $L=R\cup G \cup B$ such that $K_{B}=P(U)$ and $K_{G}=P^{\ast}(U)$. 
\end{prop}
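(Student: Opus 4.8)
The plan is to prove both directions by passing to the closed manifold $M_R(0)\cong\mathbf{S}^1\times\mathbf{S}^2$ and applying the criterion of Proposition~\ref{prop:MP}. For the forward direction, start from an RGB-diagram $L=R\cup G\cup B$. Since $R$ is an unknot, $M_R(0)\cong\mathbf{S}^1\times\mathbf{S}^2$, and both $G$ and $B$ lie in $\mathbf{S}^3\setminus\nu(R)\subset M_R(0)$. First I would show that the two meridian hypotheses force $\widehat{G}$ and $\widehat{B}$ to be isotopic to the $\mathbf{S}^1$-fiber of $M_R(0)$: this is the argument underlying the Baker--Motegi description \cite{Baker-Motegi}, namely that once $R$ is a meridian of $G$ (resp.\ $B$), performing $0$-surgery on $R$ makes $G$ (resp.\ $B$) meet a meridian disk of the dual solid torus in a single point, hence isotopic to a fiber. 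Setting $V:=M_R(0)\setminus\nu(\widehat{G})$, the knot $\widehat{B}$ becomes a pattern $P$ in the solid torus $V$ (a solid torus precisely because $\widehat{G}$ is a fiber), and the capped-off space of Proposition~\ref{prop:MP} is canonically $M_R(0)$ again, under which $\widehat{\lambda_V}$ is a fiber parallel to $\widehat{G}$. Since $\widehat{P}=\widehat{B}$ is also a fiber, $\widehat{P}$ is isotopic to $\widehat{\lambda_V}$, so $P$ is dualizable by Proposition~\ref{prop:MP}.

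It then remains to identify the two resulting knots. The satellite $P(U)$ is obtained by Dehn filling $V=M_R(0)\setminus\nu(\widehat{G})$ along $\lambda_V$, and I would check that this filling slope coincides with the $0$-framing of $G$ (using $\operatorname{lk}(G,B)=0$ and $M_{R\cup G}(0,0)\cong\mathbf{S}^3$), so that $P(U)$ is exactly the image of $B$ after the $R$--$G$ handle cancellation of Theorem~\ref{thm:Piccirillo}; that is, $P(U)=K_B$. Running the same argument with the roles of $G$ and $B$ exchanged identifies the dual $P^{\ast}$, which is isotopic to $\widehat{\lambda_V}$ and hence to $\widehat{G}$, sitting in $V^{\ast}=M_R(0)\setminus\nu(\widehat{B})$; its satellite is the image of $G$ after the $R$--$B$ cancellation, giving $P^{\ast}(U)=K_G$.

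For the converse, given a dualizable pattern $P\subset V$, I would reverse this construction. By Proposition~\ref{prop:MP}, $\widehat{P}$ is isotopic to $\widehat{\lambda_V}$ in $\mathbf{S}^1\times\mathbf{S}^2$, which I realize as $M_R(0)$ for an unknot $R$ (the $0$-framed knot presenting $\mathbf{S}^1\times\mathbf{S}^2$ as in Figure~\ref{figure:awinding}). I would then take $B:=\widehat{P}$ and $G:=\widehat{\lambda_V}$, both pushed into $\mathbf{S}^3\setminus\nu(R)$, and declare $R\cup G\cup B$ the desired diagram; the forward direction would then automatically yield $K_B=P(U)$ and $K_G=P^{\ast}(U)$.

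The hard part is the converse, specifically verifying that $R\cup G\cup B$ really is an RGB-diagram. The condition on $G=\widehat{\lambda_V}$ is essentially free, since $\lambda_V$ is a core-parallel longitude, but the requirement that $R$ be a meridian of $B=\widehat{P}$ in $\mathbf{S}^3$, together with $\operatorname{lk}(G,B)=0$, is delicate: being a fiber in $M_R(0)$ is only the surgery-theoretic shadow of being meridian-linked by $R$ in $\mathbf{S}^3$, and one must arrange the un-surgery (equivalently, the genus-one Heegaard torus of $\mathbf{S}^1\times\mathbf{S}^2$) so that both meridian conditions hold simultaneously and the linking number vanishes. I expect this to rest on the uniqueness of genus-one Heegaard splittings of $\mathbf{S}^1\times\mathbf{S}^2$ (Waldhausen \cite{Waldhausen}, as already invoked in Proposition~\ref{prop:MP}), together with the fact that a dualizable pattern has algebraic winding number one, which pins down the relevant framings and forces $\operatorname{lk}(G,B)=0$.
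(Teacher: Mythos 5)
Your overall architecture is the same as the paper's for the direction it actually proves (pattern $\Rightarrow$ RGB-diagram): you place $R=\mu_V$ as a dotted/surgered unknot, take $G=\widehat{\lambda_V}$ and $B=\widehat{P}$, and try to verify the RGB conditions. The forward direction, which the paper only cites from Piccirillo, is sketched plausibly via the Baker--Motegi viewpoint ($\widehat{G}$ and $\widehat{B}$ become $\mathbf{S}^1$-fibers of $M_R(0)$), although even there the identification of $P(U)$ with $K_B$ hinges on choosing the product structure on $V=M_R(0)\setminus\nu(\widehat{G})$ so that $\lambda_V$ is the $0$-framing of $G$, which you only promise to ``check.''

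The genuine gap is in the converse, exactly where you flag it and then defer: with $B=\widehat{P}$ pushed into $\mathbf{S}^3\setminus\nu(R)$ in the obvious way, $R$ bounds a disk meeting $B$ in geometric-winding-number-many points, so $R\cup G\cup B$ is \emph{not} an RGB-diagram, and saying ``I expect this to rest on uniqueness of genus-one Heegaard splittings and winding number one'' does not close it. The paper's proof supplies three concrete steps you are missing. First, the isotopy $\widehat{P}\simeq\widehat{\lambda_V}$ in $M_R(0)$ is converted into finitely many slides of $B'$ over $R$ inside the Kirby diagram, producing $B''$ with $R$ a meridian of $B''$; further slides over $R$ (possible because $\operatorname{lk}(R,G)=\pm1$) kill $\operatorname{lk}(G,B)$. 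Second, these slides a priori change the framing of the blue component to some $a\in\mathbf{Z}$; the paper proves $a=0$ by comparing the signatures of $X_{P(U)}(0)$ and $X_{\widetilde{K_B}}(a)$. This is not cosmetic: winding number one does not ``force'' it, and if $a\neq 0$ the diagram is not an RGB-diagram and $K_B$ would be a meridionally twisted satellite $\tau_n(P)(U)$ rather than $P(U)$ --- precisely the ambiguity your own forward-direction sketch runs into. Third, even after $X_{P(U)}(0)\cong X_{K_B}(0)$ is established, concluding the equality of \emph{knots} $P(U)=K_B$ (and symmetrically $P^{\ast}(U)=K_G$, via the auxiliary diagram $J^{\ast}$ and the isotopy of $G\cup B'$ to $G'^{\ast}\cup B^{\ast}$) requires restricting the diffeomorphism to the boundary, observing that it carries surgery dual to surgery dual, and invoking the Knot Complement Theorem; your ``the forward direction would then automatically yield $K_B=P(U)$'' presupposes that the pattern recovered from the constructed diagram is $P$ with its original parametrization, which is the very point at issue.
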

One of the purposes of this manuscript is drawing an RGB-diagram from a given special annulus presentation through the dualizable pattern given in Section~\ref{sec:annulus-dualizable}. 
So, we recall the proof of the second claim of Proposition~\ref{prop:Piccirillo}. 
\begin{lem}\label{lem:pattern-to-RGB}
For any dualizable pattern $P$, there exists an RGB-diagram $L=R\cup G \cup B$ such that $K_{B}=P(U)$ and $K_{G}=P^{\ast}(U)$. 
\end{lem}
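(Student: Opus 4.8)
The plan is to make the construction of the RGB-diagram completely explicit starting from the data of the dualizable pattern $P\colon \mathbf{S}^1\to V$, and then verify that the three defining conditions of an RGB-diagram hold and that the associated knots $K_B$ and $K_G$ are $P(U)$ and $P^{\ast}(U)$ respectively. The key input is Proposition~\ref{prop:MP}: since $P$ is dualizable, $\widehat{P}$ is isotopic to $\widehat{\lambda_V}$ in $\mathbf{S}^1\times \mathbf{S}^2$. I would exploit the description of $\mathbf{S}^1\times \mathbf{S}^2$ as $M_{\mu_V}(0)$, where $\mu_V$ is a meridian of the standardly embedded solid torus $V\subset \mathbf{S}^3$; equivalently, I regard $\mathbf{S}^1\times\mathbf{S}^2$ as $0$-surgery on an unknot, and this unknot will become (the boundary of the disk spanning) the $1$-handle $R$.

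First I would set $R$ to be a dotted circle bounding a disk $D_R$ such that $V=\mathbf{S}^3\setminus\nu(R)$, i.e. $R$ is the core of the complementary solid torus, so that a meridian of $R$ is exactly $\lambda_V$ (up to the standard parametrization) and $R$ cut out along $D_R$ realizes the gluing $V\cup_\partial(\mathbf{S}^1\times D^2)$ that turns a pattern $Q\subset V$ into the satellite $Q(U)$. Next I would set $B=P$, placed inside $V=\mathbf{S}^3\setminus\nu(R)$ and given $0$-framing; cancelling the $1$-handle $R$ against a $2$-handle then recovers $P(U)=K_B$ by the standard fact recalled just before the statement (that sliding $B$ off $D_R$ and cancelling the $R$–$G$ pair produces $K_B$ with $0$-framing). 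For $G$ I would take the dual pattern: using the isotopy of $\widehat{P}$ to $\widehat{\lambda_V}$ in $M_R(0)=\mathbf{S}^1\times\mathbf{S}^2$, the curve $\widehat{\lambda_V}$ becomes a knot $G$ whose complement in $\mathbf{S}^1\times\mathbf{S}^2$ is the dual solid torus $V^{\ast}$, and inside $V^{\ast}$ the pattern $\widehat{P}=\lambda_{V^\ast}$-core yields $P^{\ast}(U)=K_G$ after the symmetric cancellation. This is exactly the picture already drawn in Figures~\ref{figure:dualizable} and~\ref{figure:awinding}.

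The verification of the three RGB-conditions then proceeds as follows. The condition $\operatorname{lk}(G,B)=0$ follows because $G=\widehat{\lambda_V}$ and $B=\widehat{P}$ are both isotopic to $\widehat{\lambda_V}$ in $M_R(0)$, hence are homologous there, and in $\mathbf{S}^1\times\mathbf{S}^2$ the relevant intersection/linking pairing forces their $\mathbf{S}^3$-linking to vanish once we account for $R$ being $0$-framed; concretely, the algebraic winding number one property of a dualizable pattern (Remark~\ref{rem:winding}) gives $\operatorname{lk}(B,\lambda_V)=\operatorname{lk}(G,\lambda_V)=1$ and one checks the crossing count for $\operatorname{lk}(G,B)$ directly from the construction. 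For the two meridian conditions, since $R$ bounds $D_R$ with $V=\mathbf{S}^3\setminus\nu(R)$ and $B=P$ has algebraic (indeed geometric, after isotopy) winding number one with respect to $V$, the dotted circle $R$ is a meridian of $B$, giving $B\cup R\cong B\cup\mu_B$; the same argument applied with $G=\widehat{\lambda_V}$, which also meets $D_R$ once, gives $G\cup R\cong G\cup\mu_G$.

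The main obstacle I anticipate is not the homological bookkeeping but making the isotopy of Proposition~\ref{prop:MP} compatible with a single ambient diagram in $\mathbf{S}^3$ containing $R$, $G$, and $B$ simultaneously and $0$-framed. The subtlety is that $G$ is naturally defined in $M_R(0)=\mathbf{S}^1\times\mathbf{S}^2$ rather than in $\mathbf{S}^3$, so I must pull $\widehat{\lambda_V}$ back to a concrete curve in $\mathbf{S}^3\setminus\nu(R)$ and confirm that its framing induced by $V^{\ast}$ matches the $0$-framing required by an RGB-diagram; this is precisely why the $0$-framings of $\widehat{\lambda_V}$ and $\widehat{\lambda_{V^\ast}}$ are recorded in Figure~\ref{figure:dualizable}. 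I would resolve this by tracking the longitudes $\lambda_V,\lambda_P,\lambda_{V^\ast},\lambda_{P^\ast}$ through the dualizing homeomorphism $f$ (with $f(\lambda_V)=\lambda_{P^\ast}$, $f(\lambda_P)=\lambda_{V^\ast}$, $f(\mu_V)=-\mu_{P^\ast}$, $f(\mu_P)=-\mu_{V^\ast}$) and reading off that the surgery coefficients are all $0$, so that the resulting three-component diagram $R\cup G\cup B$ is genuinely an RGB-diagram realizing $K_B=P(U)$ and $K_G=P^{\ast}(U)$.
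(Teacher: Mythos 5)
Your overall skeleton matches the paper's: take $R$ a dotted circle with $V=\mathbf{S}^3\setminus\nu(R)$, take $G$ a $0$-framed meridian of $R$, and place $P$ in $V$ as the third component. But there is a genuine gap at the central step. You assert that $B=P$ has ``algebraic (indeed geometric, after isotopy) winding number one with respect to $V$,'' and you use this to conclude that $R$ is a meridian of $B$. This is false: dualizability only gives \emph{algebraic} winding number one (Remark~\ref{rem:winding}). The geometric winding number is an invariant of the pattern up to isotopy \emph{inside} $V$, and the isotopy supplied by Proposition~\ref{prop:MP} takes place in $\mathbf{S}^1\times\mathbf{S}^2=M_R(0)$, not in $V$; it is realized in the diagram only by sliding $B$ over the $0$-framed curve $R$ finitely many times. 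Without those slides the link $B\cup R$ need not be isotopic to $B\cup\mu_B$, so the first RGB condition simply fails for your unmodified diagram. (If every dualizable pattern could be isotoped in $V$ to geometric winding number one, it would be its own dual by Remark~\ref{rem:winding} and one would always get $K_B=K_G$, contradicting the examples in \cite{Piccirillo, Miller-Piccirillo}.)

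Once you do perform the required slides of $B$ over $R$ --- which is exactly what the paper does to pass from $B'$ to $B''$ --- two further problems arise that your argument does not address. Each such slide changes $\operatorname{lk}(G,B)$ by $\pm\operatorname{lk}(G,R)=\pm1$ and changes the framing of $B$; so your claim that $\operatorname{lk}(G,B)=0$ ``by a crossing count'' is only valid for the unslid diagram, which is not an RGB-diagram. The paper repairs the linking number by additional slides over $R$ that preserve the meridian condition (Figure~\ref{figure:slide1}) and then pins the resulting framing $a$ to $0$ by comparing the signature of $X_{P(U)}(0)$ with that of $X_{\widetilde{K_B}}(a)$; some argument of this kind is unavoidable and is missing from your proposal. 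Finally, your identification $K_G=P^{\ast}(U)$ is asserted rather than proved: one needs (as in the paper) to build the dual diagram $J^{\ast}$, use dualizability to see that $G\cup B'$ and $G'^{\ast}\cup B^{\ast}$ are isotopic as framed links in the boundary of the $1$-handle, and then pass from the resulting diffeomorphism of $0$-traces to a homeomorphism of $\mathbf{S}^3$ carrying $P^{\ast}(U)$ to $K_G$.
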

\begin{proof}
This proof is due to Piccirillo \cite[Proposition~4.2]{Piccirillo}. 
Let $P$ be a dualizable pattern. Draw $P$ as Figure~\ref{figure:pattern}. 
Let $J=R\cup G\cup B'$ be the right Kirby diagram depicted in Figure~\ref{figure:diagramJ}. 
Then, we see that $X_{P(U)}(0)\cong X_{J}$. 
Note that $J$ is not an RGB-diagram generally because $R$ may not be a meridian of $B'$. 
However, because $P$ is dualizable, the framed knot $B'$ can be made a meridian of $R$ by sliding $B'$ over $R$ finitely many times (see Proposition~\ref{prop:MP} and the caption of Figure~\ref{figure:awinding}). 
Denote the resulting framed knot by $B''$. 
In general, the linking number of $G$ and $B''$ is not zero. 
So, we deform $B''$ by sliding $B''$ over $R$ as in Figure~\ref{figure:slide1} in order to vanish the linking number. 
Since the linking number of $R$ and $G$ is $\pm 1$, we can finish such deformations finitely many times. 
We denote the resulting framed knot by $B$ and its framing by $a\in \mathbf{Z}$. 
Put $L=R\cup G\cup B$. 
Then, we have $a=0$ as follows. 
Let $\widetilde{K_{B}}$ be the knot obtained from $B$ of $L$ in the same way as the construction of $K_B$ from an RGB-diagram. 
Then, we have $X_{P(U)}(0)\cong X_{J}\cong X_{L}\cong X_{\widetilde{K_{B}}}(a)$. 
Comparing the signatures of $X_{P(U)}(0)$ and $X_{\widetilde{K_{B}}}(a)$, we obtain $a=0$. 
As a result, we see that $L=R\cup G\cup B$ is an RGB-diagram and $X_{P(U)}(0)\cong X_{K_{B}}(0)$. 
\par
We will prove that $L$ is the desired RGB-diagram. 
Let $g\colon X_{P(U)}(0) \rightarrow X_{K_{B}}(0)$ be the diffeomorphism given as above. 
By restricting $g$ to the boundary, we have $g|_{\partial}\colon M_{P(U)}(0)\rightarrow M_{K_{B}}(0)$ which sends the surgery dual to $P(U)$ to the surgery dual to $K_{B}$ 
(in fact, these surgery duals are sent to $B'$ and $B''$ by the differomorphisms $X_{P(U)}(0)\cong X_{J}\cong X_{L}\cong X_{K_{B}}(0)$). 
Hence, by considering the inverses of the $0$-framed surgeries, we see that $g|_{\partial}$ induces an orientation-preserving homeomorphism $f\colon \mathbf{S}^{3}\rightarrow \mathbf{S}^{3}$ which sends $P(U)$ to $K_{B}$, that is, $P(U)=K_{B}$. 
To prove $P^{\ast}(U)=K_{G}$, define a Kirby diagram $J^{\ast}=R\cup G'^{\ast}\cup B^{\ast}$ as in the right picture of Figure~\ref{figure:diagramJdual}. 
We see that $X_{P^{\ast}(U)}(0)\cong X_{J^{\ast}}$. 
Since $P$ is dualizable, by Proposition~\ref{prop:MP} (and Figure~\ref{figure:dualizable}), the components $G\cup B'$ of $J$ is isotopic to the components $G'^{\ast}\cup B^{\ast}$ of $J^{\ast}$ as framed links in the boundary of the $1$-handle represented by $R$, where the isotopy sends $G$ to $G'^{\ast}$ and $B'$ to $B^{\ast}$ (see Figure~\ref{figure:diagramJandJdual}). 
Hence, we obtain $X_{P^{\ast}(U)}(0)\cong X_{J^{\ast}}\cong X_{J}\cong X_{L}\cong X_{K_{G}}(0)$. 
Note that the last diffeomorphism is given by Theorem~\ref{thm:Piccirillo}. 
Hence, by the same discussion as above, there is a differomorphism $g^{\ast} \colon X_{P^{\ast}(U)}(0) \rightarrow X_{K_{G}}(0)$ and its restriction to the boundaries 
$g^{\ast}|_{\partial}\colon M_{P^{\ast}(U)}(0)\rightarrow M_{K_{G}}(0)$ induces an orientation-preserving homeomorphism $f^{\ast}\colon \mathbf{S}^{3}\rightarrow \mathbf{S}^{3}$ which sends $P^{\ast}(U)$ to $K_{G}$, that is, $P^{\ast}(U)=K_{G}$. 
\end{proof}
\begin{figure}[h]
\centering
\includegraphics[scale=0.45]{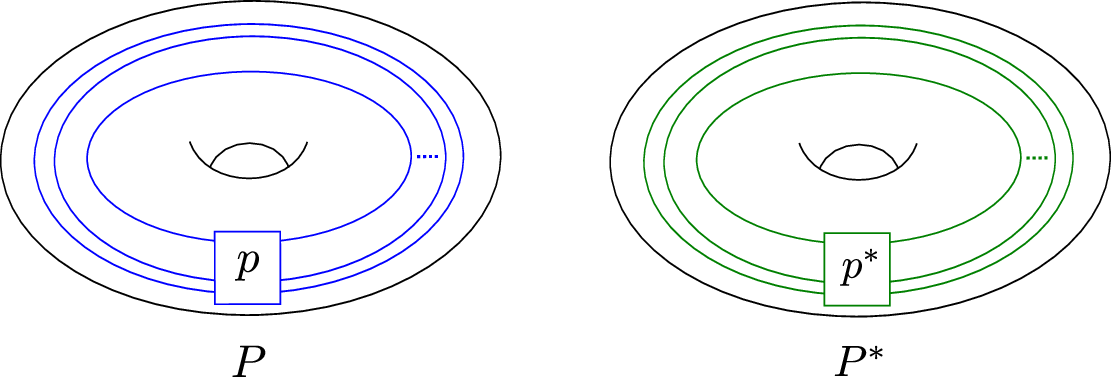}
\caption{(color online) Patterns $P$ and $P^{\ast}$. 
}\label{figure:pattern}
\end{figure}
\begin{figure}[h]
\centering
\includegraphics[scale=0.58]{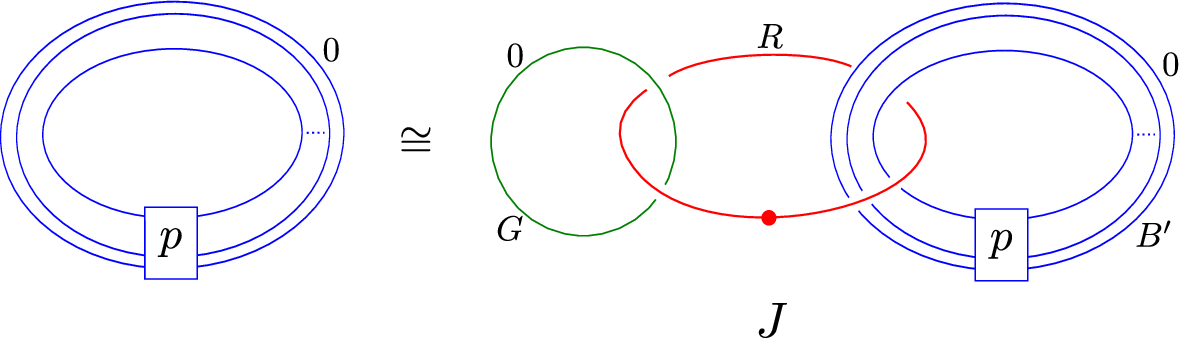}
\caption{(color online) $X_{P(U)}(0)\cong X_{J}$
}\label{figure:diagramJ}
\end{figure}
\begin{figure}[h]
\centering
\includegraphics[scale=0.6]{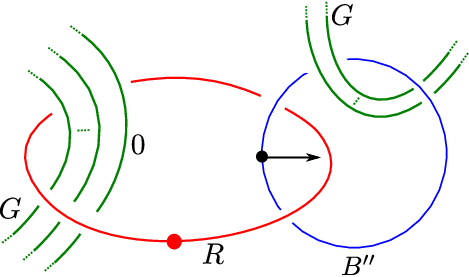}
\caption{(color online) Slide to vanish the linking number of $G$ and $B''$
}\label{figure:slide1}
\end{figure}
\begin{figure}[h]
\centering
\includegraphics[scale=0.58]{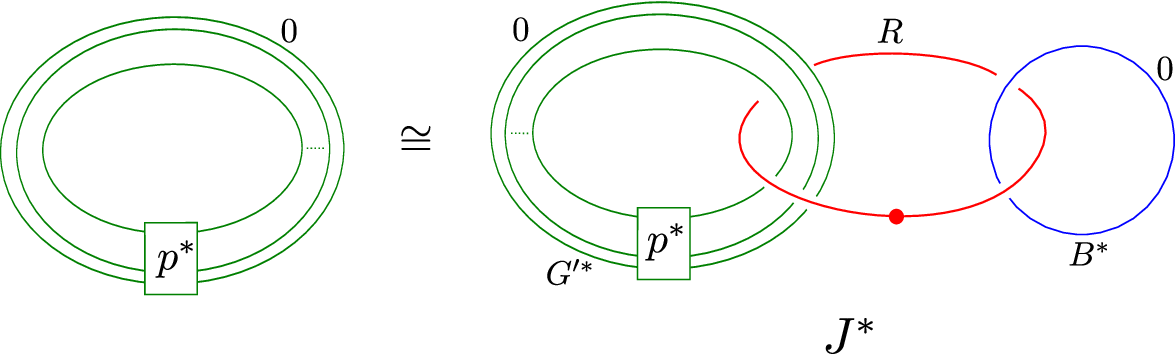}
\caption{(color online) $X_{P^{\ast}(U)}(0)\cong X_{J^{\ast}}$
}\label{figure:diagramJdual}
\end{figure}
\begin{figure}[h]
\centering
\includegraphics[scale=0.37]{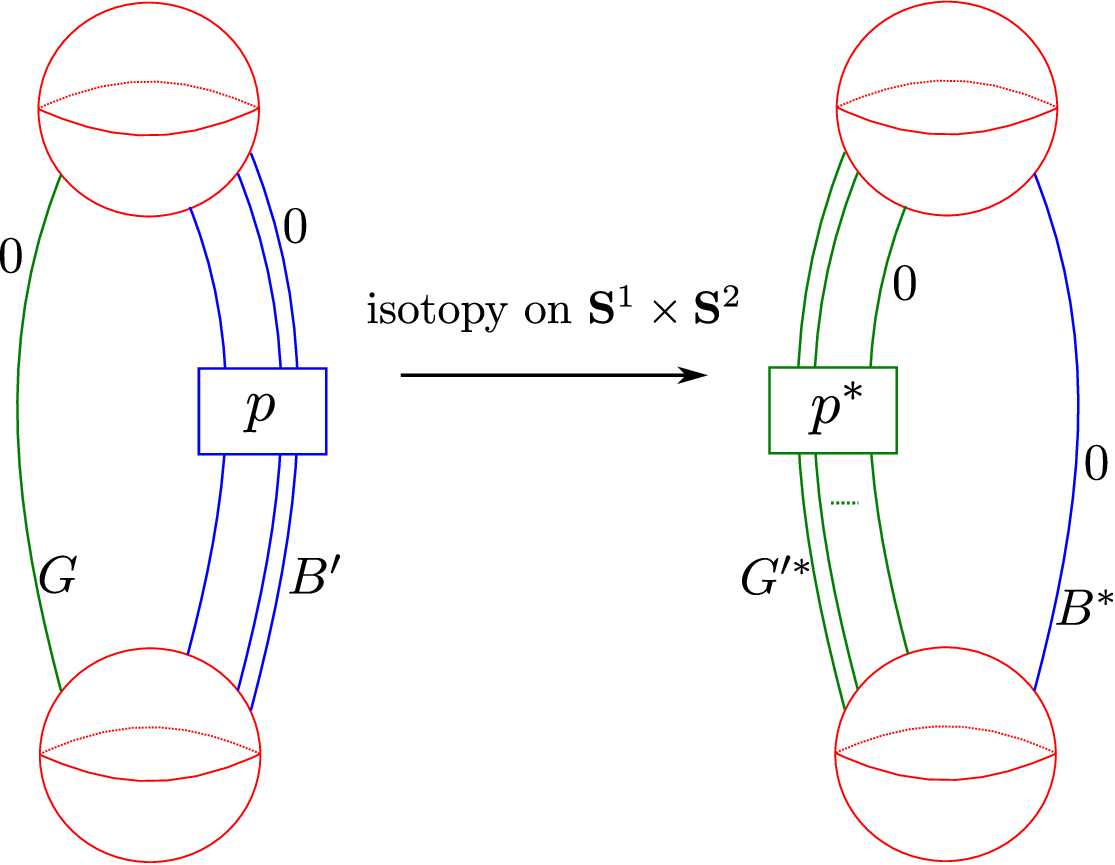}
\caption{(color online) $X_{J}\cong X_{J^{\ast}}$
}\label{figure:diagramJandJdual}
\end{figure}
\begin{rem}
We see that Figure~\ref{figure:diagramJandJdual} gives an alternative proof of Theorem~\ref{thm:dualizable-4dim}. 
\end{rem}
\begin{rem}
For a pattern $P\colon \mathbf{S}^1\rightarrow V$, denote the pattern obtained by twisting $n$ times along a meridian of $V$ by $\tau_{n}(P)$. 
By utilizing the technique in Figure~\ref{figure:diagramJandJdual}, we can prove that $X_{P(U)}(n)\cong X_{\tau_{n}(P^{\ast})(U)}(n)$ for any $n\in \mathbf{Z}$ (see Figure~\ref{figure:diagramJandJdual2} and see also \cite[Theorem~3.6]{Miller-Piccirillo}). 
Moreover, we can prove that $X_{\tau_{m}(P)(U)}(m+n)\cong X_{\tau_{n}(P^{\ast})(U)}(m+n)$ (see Figure~\ref{figure:diagramJandJdual3}). 
\par
Baker and Motegi \cite[Theorem~2.5]{Baker-Motegi} constructed pairs of knots $K_{m}$ and $k_{n}$ which satisfy $M_{K_{m}}(m+n) \cong M_{k_{n}}(m+n)$ for any $m,n\in\mathbf{Z}$. 
We remark that the pair $\{ \tau_{m}(P)(U), \tau_{n}(P^{\ast})(U)\}$ is essentially equal to the pair $\{K_{m}, k_{n}\}$. 
\end{rem}
\begin{figure}[ht]
\centering
\includegraphics[scale=0.37]{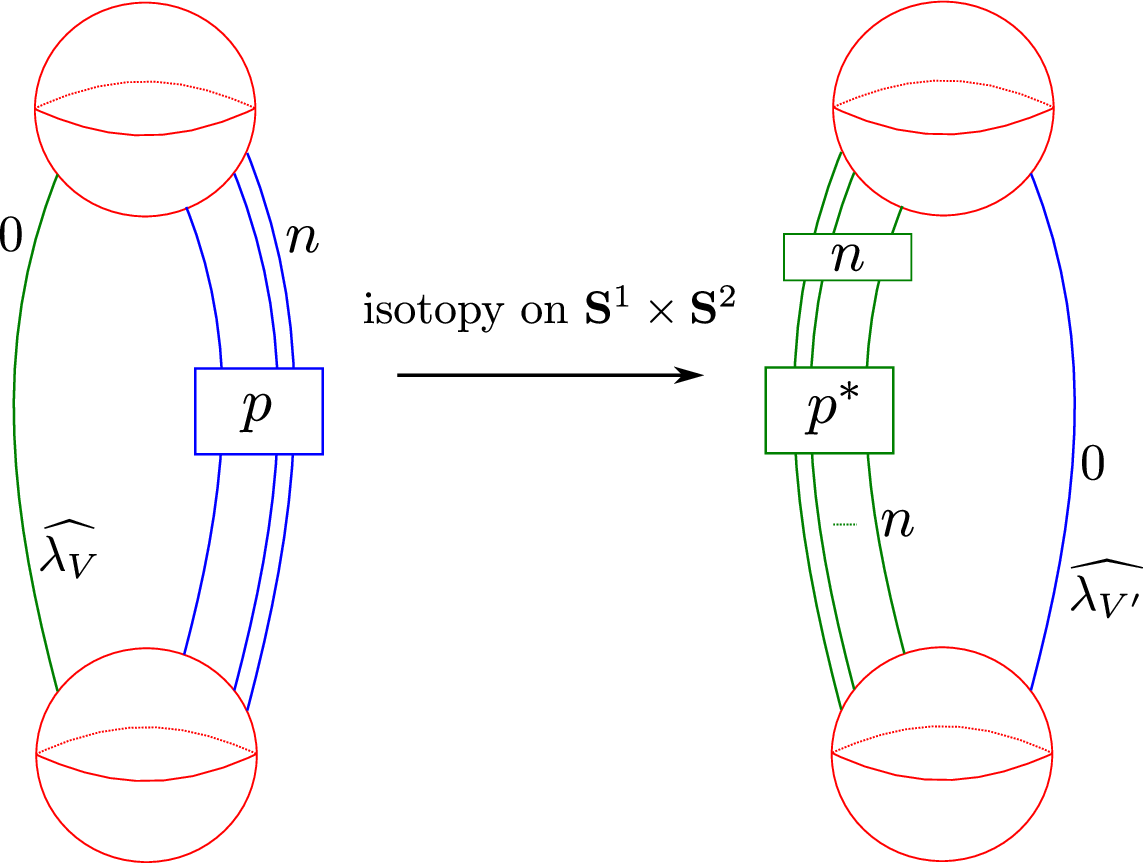}
\caption{(color online) The left blue curve with framing $n$ is $P\subset V$ and the right green curve with framing $n$ is $\tau_{n}(P^{\ast})\subset V'$. 
We see that $P(U)$ and $\tau_{n}(P^{\ast})(U)$ have the diffeomorphic $n$-trace. The box with the label $n$ means the $n$ full-twists. 
}\label{figure:diagramJandJdual2}
\end{figure}
\begin{figure}[h]
\centering
\includegraphics[scale=0.91]{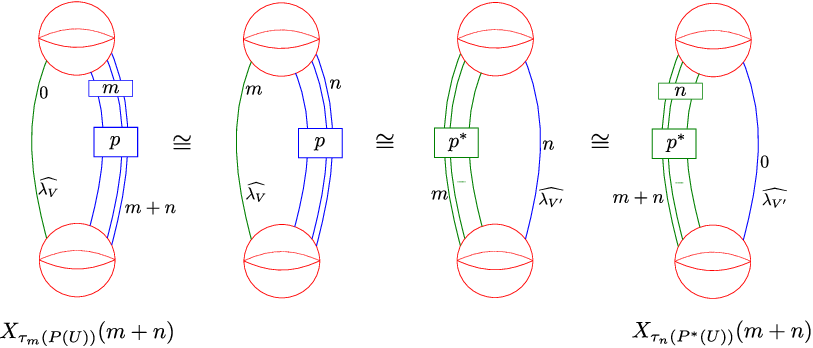}
\caption{(color online) $X_{\tau_{m}(P)(U)}(m+n)\cong X_{\tau_{n}(P^{\ast})(U)}(m+n)$
}\label{figure:diagramJandJdual3}
\end{figure}
%
\subsection{Oriented RGB-diagram}\label{sec:RGB-diagram-oriented}
Recall that dualizable patterns are oriented. 
We can also consider orientations of RGB-diagrams as follows. 
An RGB-diagram $L=R\cup G\cup B$ is {\it oriented} if $G$ and $B$ are oriented so that they are homologous in $\mathbf{S}^{3}\setminus R$. 
From an oriented RGB-diagram $L$, we can also construct two knots $K_{B}$ and $K_{G}$ as in Section~\ref{sec:RGB-diagram}. 
Then, we can give orientations of $K_{B}$ and $K_{G}$ by the orientations of $B$ and $G$, respectively. 
\par 
Let $P$ be a dualizable pattern. 
In order to construct an oriented RGB-diagram $L=R\cup G\cup B$ from $P$, recall the proof of Lemma~\ref{lem:pattern-to-RGB}. 
Firstly, we construct a Kirby diagram $J=R\cup G\cup B'$ from $P$ (see Figure~\ref{figure:diagramJ}). 
Then, we can orient $G$ and $B'$ by using the orientations of $\lambda_{V}$ and $P$, respectively. 
Note that $G$ and $B'$ are homologous in $\mathbf{S}^3\setminus R$ because of the definition of the orientation of $\lambda_{V}$. 
Secondly, to construct $L=R\cup G\cup B$ from $J$, we slide $B'$ over $R$ finitely many times. 
After the operation, the linking number of $B'$ and $R$ does not change. 
Hence, $G$ and $B$ are homologous in $\mathbf{S}^3\setminus R$, and $L$ is an oriented RGB-diagram. 
\par 
Recall that the orientations of $B$ and $G$ are given by the orientations of $P$ and $\lambda_{V}$, respectively. 
Moreover, the orientation of $P^{\ast}$ is given by the orientation of $\lambda_{V}$. 
Hence, by Lemma~\ref{lem:pattern-to-RGB}, we see that $K_{B}=P(U)$ and $K_{G}=P^{\ast}(U)$ as oriented knots. 
\begin{lem}[{the oriented version of Lemma~\ref{lem:pattern-to-RGB}}]\label{lem:pattern-to-RGB-oriented}
For any dualizable pattern $P$, there exists an oriented RGB-diagram $L=R\cup G \cup B$ such that $K_{B}=P(U)$ and $K_{G}=P^{\ast}(U)$ as oriented knots. 
\end{lem}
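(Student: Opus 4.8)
The plan is to re-run the construction in the proof of Lemma~\ref{lem:pattern-to-RGB} unchanged, but now to carry orientations along every diagram and to verify the two extra points that the unoriented argument ignored: that the two $2$-handles remain homologous in the complement of $R$, so that the output is a genuine \emph{oriented} RGB-diagram, and that the diffeomorphisms used there respect the chosen orientations, so that the equalities $K_{B}=P(U)$ and $K_{G}=P^{\ast}(U)$ persist as oriented knots.

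First I would orient the intermediate diagram $J=R\cup G\cup B'$ of Figure~\ref{figure:diagramJ}. There $B'$ is a copy of the pattern $P$ and $G$ is a copy of $\lambda_{V}$, both sitting in the solid torus $V=\mathbf{S}^{3}\setminus\nu(R)$; accordingly I orient $B'$ by the orientation of $P$ and $G$ by the orientation of $\lambda_{V}$. Because $P$ is dualizable, Remark~\ref{rem:winding} tells us its algebraic winding number equals one, so $[P]=[\lambda_{V}]$ in $H_{1}(V)\cong\mathbf{Z}$; hence $G$ and $B'$ are homologous in $\mathbf{S}^{3}\setminus R$. I then pass from $J$ to $L=R\cup G\cup B$ by the handle slides of Figure~\ref{figure:slide1}. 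Each of these is a slide of the $2$-handle over $R$, which leaves the linking number with $R$, and therefore the class in $H_{1}(\mathbf{S}^{3}\setminus\nu(R))$, unchanged; consequently $G$ and $B$ stay homologous in $\mathbf{S}^{3}\setminus R$, and $L$ is an oriented RGB-diagram.

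It remains to upgrade the identifications to the oriented level. Forming $K_{B}$ from $L$ preserves the orientation of $B$, and forming $K_{G}$ preserves that of $G$; by construction these are exactly the orientations coming from $P$ and from $\lambda_{V}$, and the orientation of $P^{\ast}$ is by definition the one induced by $\lambda_{V}$. Thus I would simply re-read the boundary-restriction argument of Lemma~\ref{lem:pattern-to-RGB}, together with Theorem~\ref{thm:Piccirillo} for the $K_{G}$ side, while keeping track of orientations: the orientation-preserving homeomorphisms $\mathbf{S}^{3}\to\mathbf{S}^{3}$ produced there carry $P(U)$ to $K_{B}$ and $P^{\ast}(U)$ to $K_{G}$ compatibly with these orientations, yielding $K_{B}=P(U)$ and $K_{G}=P^{\ast}(U)$ as oriented knots. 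The step I expect to be the real obstacle is this last one: the unoriented proof never had to exclude a global sign flip, so the delicate point is to confirm that no orientation reversal sneaks into the chain $X_{P(U)}(0)\cong X_{J}\cong X_{L}\cong X_{K_{B}}(0)$ (and its dual), and it is precisely the normalization $[P]=+[\lambda_{V}]$ from Remark~\ref{rem:winding} that pins down the correct sign.
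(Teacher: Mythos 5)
Your proposal follows essentially the same route as the paper: orient $B'$ by $P$ and $G$ by $\lambda_{V}$ in the diagram $J$, observe that handle slides over $R$ preserve the class in $H_{1}(\mathbf{S}^{3}\setminus\nu(R))$ so the result is an oriented RGB-diagram, and note that the orientation of $P^{\ast}$ is by definition the one induced by $\lambda_{V}$ so the identifications of Lemma~\ref{lem:pattern-to-RGB} persist orientedly. Your explicit appeal to Remark~\ref{rem:winding} to get $[P]=+[\lambda_{V}]$ is a slightly more careful phrasing of the paper's remark that this follows from the definition of the orientation of $\lambda_{V}$, but the argument is the same.
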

%
%
\subsection{From special annulus presentations to RGB-diagrams}\label{sec:annulus-to-RGB}
Let $K$ be a knot with a special annulus presentation $(A, b)$. 
In Section~\ref{sec:annulus-dualizable}, we construct a dualizable patterns $P_{\pm}$ such that $P_{\pm}(U)=K$ and $P_{\pm}^{\ast}(U)=A^{\pm1}(K)$. 
In this subsection, we draw RGB-diagrams $L_{\pm}=R\cup G_{\pm} \cup B_{\pm}$ such that $K_{B_{\pm}}=P_{\pm}(U)=K$ and $K_{G_{\pm}}=P_{\pm}^{\ast}(U)=A^{\pm 1}(K)$ by utilizing (the proof of)  Lemma~\ref{lem:pattern-to-RGB}. 
Here, we only consider the case of Figure~\ref{figure:annulus-dualizable1-2} $(1)$. 
For three other cases, the similar discussions work. 
\par 
Let $K$ be a knot with a special annulus presentation $(A, b)$, where $A$ is $-1$ twisted (see the left picture of Figure~\ref{figure:annulus-RGB}). 
The dualizable pattern $P=P_{+}\subset \mathbf{S}^{3}\setminus \nu(\beta)$, which satisfies $P(U)=K$ and $P^{\ast}(U)=A^{+1}(K)$, is given as the center picture of Figure~\ref{figure:annulus-RGB}. 
Then, the Kirby diagram $J=R\cup G\cup B'$ obtained from $P$ as in the proof of Lemma~\ref{lem:pattern-to-RGB} is given as the right picture of Figure~\ref{figure:annulus-RGB}. 
To obtain the RGB-diagram $L$ given in the proof of Lemma~\ref{lem:pattern-to-RGB}, firstly slide $B'$ over $R$ along the black arrow as in the left picture of Figure~\ref{figure:annulus-RGB2} and denote the resulting knot by $B''$. 
In the resulting diagram, the linking number of $G$ and $B''$ is not zero. 
So, by sliding $B''$ over $R$ along the dotted black arrow as in the center picture of Figure~\ref{figure:annulus-RGB2}, we delete the linking number. 
Then the resulting diagram is an RGB-diagram $L=R\cup G\cup B$ (see the right picture of Figure~\ref{figure:annulus-RGB2}). 
This is the desired RGB-diagram $L_{+}=R\cup G_{+}\cup B_{+}$. 
As a consequence, we obtain Theorem~\ref{thm:annulus-RGB} below. 
%
\begin{thm}\label{thm:annulus-RGB}
Let $K$ be a knot with a special annulus presentation $(A,b)$. 
Then, there are dualizable patterns $P_{\pm}$ and RGB-diagrams $L_{\pm}=R\cup G_{\pm}\cup B_{\pm}$ such that $K=P_{\pm}(U)=K_{B_{\pm}}$ and $A^{\pm}(K)=P_{\pm}(U)=K_{G_{\pm}}$. 
In particular, such $P_{\pm}$ and $L_{\pm}=R\cup G_{\pm}\cup B_{\pm}$ are given as in Figure~\ref{figure:annulus-dualizable-RGB}. 
\end{thm}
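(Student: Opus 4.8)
The plan is to obtain the statement by composing the two constructions already established in the paper. First I would invoke Proposition~\ref{prop:Miller-Piccirillo}: since $K$ has a special annulus presentation $(A,b)$, there are dualizable patterns $P_{+}$ and $P_{-}$, drawn explicitly in Figures~\ref{figure:annulus-dualizable1-2} and \ref{figure:annulus-dualizable3-4}, satisfying $P_{\pm}(U)=K$ and $P_{\pm}^{\ast}(U)=A^{\pm 1}(K)$. Then, for each sign, I would apply Lemma~\ref{lem:pattern-to-RGB} to the pattern $P_{\pm}$, which produces an RGB-diagram $L_{\pm}=R\cup G_{\pm}\cup B_{\pm}$ with $K_{B_{\pm}}=P_{\pm}(U)=K$ and $K_{G_{\pm}}=P_{\pm}^{\ast}(U)=A^{\pm 1}(K)$. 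At the level of mere existence this already gives the theorem, so the substance of the proof is to carry out the recipe of Lemma~\ref{lem:pattern-to-RGB} concretely for the patterns $P_{\pm}$ and thereby justify the explicit diagrams claimed in Figure~\ref{figure:annulus-dualizable-RGB}.

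To make the construction explicit I would work through one representative case, say case $(1)$ of Figure~\ref{figure:annulus-dualizable1-2}, in which $A$ is $-1$ twisted and $P=P_{+}$. Following the proof of Lemma~\ref{lem:pattern-to-RGB}, I first form the Kirby diagram $J=R\cup G\cup B'$ from $P$ as in Figure~\ref{figure:diagramJ}; for the pattern at hand this is the right picture of Figure~\ref{figure:annulus-RGB}, so that $X_{K}(0)\cong X_{J}$. Next I slide $B'$ over $R$ to turn $B'$ into a meridian of $R$ (this is where dualizability of $P$ is used, via Proposition~\ref{prop:MP}), obtaining $B''$, and then slide $B''$ over $R$ again to remove the linking number of $G$ and $B''$. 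For the present pattern these two slides are exactly the moves indicated by the black arrows in Figure~\ref{figure:annulus-RGB2}, and the resulting diagram $L=R\cup G\cup B$ is the desired RGB-diagram $L_{+}=R\cup G_{+}\cup B_{+}$.

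Two points require checking rather than mere drawing. First, the framing of $B$ is genuinely $0$: as in Lemma~\ref{lem:pattern-to-RGB}, one compares the signatures of $X_{K}(0)$ and $X_{\widetilde{K_{B}}}(a)$ to force $a=0$, so $L$ is honestly an RGB-diagram. Second, the identifications $K_{B_{+}}=K$ and $K_{G_{+}}=A^{+1}(K)$ follow from the corresponding conclusions of Lemma~\ref{lem:pattern-to-RGB} together with $P_{+}(U)=K$ and $P_{+}^{\ast}(U)=A^{+1}(K)$ from Proposition~\ref{prop:Miller-Piccirillo}. The remaining three cases, $(2)$ of Figure~\ref{figure:annulus-dualizable1-2} and $(3),(4)$ of Figure~\ref{figure:annulus-dualizable3-4}, are treated by the same sequence of slides applied to $P_{-}$ and to the $+1$-twisted annulus, and I would simply record the analogous pictures.

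I expect the main obstacle to be purely diagrammatic bookkeeping: verifying that the specific handle slides drawn in Figure~\ref{figure:annulus-RGB2} indeed realize the abstract slides prescribed in Lemma~\ref{lem:pattern-to-RGB} (first making $B'$ a meridian of $R$, then killing $\operatorname{lk}(G,B)$), and that no unintended crossings or framing changes are introduced along the way. There is no new topological input beyond Propositions~\ref{prop:MP} and \ref{prop:Miller-Piccirillo} and Lemma~\ref{lem:pattern-to-RGB}; the theorem is an explicit synthesis of these, and its value lies in the concrete recipe recorded in Figure~\ref{figure:annulus-dualizable-RGB}.
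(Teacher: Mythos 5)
Your proposal is correct and follows essentially the same route as the paper: Section~\ref{sec:annulus-to-RGB} derives Theorem~\ref{thm:annulus-RGB} precisely by feeding the patterns $P_{\pm}$ of Proposition~\ref{prop:Miller-Piccirillo} into the explicit slide recipe from the proof of Lemma~\ref{lem:pattern-to-RGB}, working out case $(1)$ of Figure~\ref{figure:annulus-dualizable1-2} via Figures~\ref{figure:annulus-RGB} and \ref{figure:annulus-RGB2} and declaring the other three cases analogous. The only cosmetic remark is that the displayed equalities $A^{\pm}(K)=P_{\pm}(U)=K_{G_{\pm}}$ in the statement should read $P_{\pm}^{\ast}(U)$, which you have correctly interpreted.
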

%
By the discussions in Sections~\ref{sec:annulus-dualizable-oriented} and \ref{sec:RGB-diagram-oriented}, we obtain the oriented version of Theorem~\ref{thm:annulus-RGB} as follows. 
\begin{thm}[the oriented version of Theorem~\ref{thm:annulus-RGB}]\label{thm:annulus-RGB-oriented}
Let $K$ be an oriented knot with a special annulus presentation $(A,b)$. 
Give $A^{\pm1}(K)$ the orientation induced by $K$. 
Then, there are dualizable patterns $P_{\pm}$ and oriented RGB-diagrams $L_{\pm}=R\cup G_{\pm}\cup B_{\pm}$ such that $K=P_{\pm}(U)=K_{B_{\pm}}$ and $A^{\pm}(K)=-P_{\pm}(U)=-K_{G_{\pm}}$. 
In particular, such $P_{\pm}$ and $L_{\pm}=R\cup G_{\pm}\cup B_{\pm}$ are given as in Figure~\ref{figure:annulus-dualizable-RGB}, where the orientations of $P_{\pm}$ and $B_{\pm}$ are induced by $K$ and the orientations of $P_{\pm}^{\ast}$ and $G_{\pm}$ are induced by a meridian of $\beta$ (or $R$) which is homologous to $K$ in $\mathbf{S}^{3}\setminus \beta$. 
\end{thm}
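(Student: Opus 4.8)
The plan is to obtain this statement by concatenating the two oriented refinements already established, exactly as Theorem~\ref{thm:annulus-RGB} follows from Proposition~\ref{prop:Miller-Piccirillo} and Lemma~\ref{lem:pattern-to-RGB} in the unoriented setting. The only genuinely new content is the bookkeeping of orientations along the chain of identifications; the geometric constructions (the patterns of Figures~\ref{figure:annulus-dualizable1-2} and \ref{figure:annulus-dualizable3-4} and the slides producing $L_{\pm}$ in Section~\ref{sec:annulus-to-RGB}) are unchanged.

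First I would fix an orientation of $K$ and give $A^{\pm1}(K)$ the induced orientation. Applying Proposition~\ref{prop:Miller-Piccirillo-oriented} then produces dualizable patterns $P_{\pm}$, oriented by the orientation of $K$ (equivalently, with $\lambda_{V}$ oriented so that $P_{\pm}$ is homologous to a positive multiple of $\lambda_{V}$), such that $P_{\pm}(U)=K$ and $P_{\pm}^{\ast}(U)=-A^{\pm1}(K)$ as oriented knots. These are precisely the oriented patterns drawn in Figures~\ref{figure:annulus-dualizable1-2} and \ref{figure:annulus-dualizable3-4}.

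Next I would feed each $P_{\pm}$ into Lemma~\ref{lem:pattern-to-RGB-oriented}, which yields an oriented RGB-diagram $L_{\pm}=R\cup G_{\pm}\cup B_{\pm}$ with $K_{B_{\pm}}=P_{\pm}(U)$ and $K_{G_{\pm}}=P_{\pm}^{\ast}(U)$ as oriented knots. Here $B_{\pm}$ carries the orientation of $P_{\pm}$ and $G_{\pm}$ carries the orientation of $\lambda_{V}$, which under the identification $V=\mathbf{S}^{3}\setminus\nu(\beta)$ is a meridian of $\beta$ (equivalently of $R$) homologous to $K$ in $\mathbf{S}^{3}\setminus\beta$; this is the orientation named in the statement. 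That $L_{\pm}$ is genuinely oriented --- that is, that $G_{\pm}$ and $B_{\pm}$ are homologous in $\mathbf{S}^{3}\setminus R$ --- follows because $\lambda_{V}$ was oriented so that $P_{\pm}$ has algebraic winding number one (Remark~\ref{rem:winding}), and the normalizing slides of $B'$ over $R$ in the proof of Lemma~\ref{lem:pattern-to-RGB} do not change this homology class. Combining the two steps gives $K=P_{\pm}(U)=K_{B_{\pm}}$ and $A^{\pm1}(K)=-P_{\pm}^{\ast}(U)=-K_{G_{\pm}}$ as oriented knots, which is the assertion; the explicit diagrams are those constructed in Section~\ref{sec:annulus-to-RGB} (Figure~\ref{figure:annulus-dualizable-RGB}) with the orientations just described.

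The main obstacle --- really the only point requiring care --- is pinning down the sign in $P_{\pm}^{\ast}(U)=-A^{\pm1}(K)$. This is the computation of Section~\ref{sec:annulus-dualizable-oriented}: one tracks a $0$-framed longitude $l$ of $A^{\pm1}(K)$ through the Osoinach--Teragaito homeomorphism $\mathbf{S}^{3}\setminus\nu(A^{\pm1}(K))\cong M_{K}(0)\setminus\nu(\beta)$ of Equation~(\ref{eq1}), where it is sent to $\lambda_{P_{\pm}}-\mu_{\beta}$ and hence to $-\lambda_{V}$, and then through the dualizing homeomorphism of Equation~(\ref{eq2}), where $\lambda_{V}\mapsto\lambda_{P_{\pm}^{\ast}}$, so that $l$ becomes a reversed longitude of $P_{\pm}^{\ast}(U)$. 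Once this sign is fixed, everything else is the orientation-compatible version of arguments already given, and no new difficulty arises.
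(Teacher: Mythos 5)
Your proposal is correct and follows exactly the paper's route: the theorem is obtained by composing Proposition~\ref{prop:Miller-Piccirillo-oriented} (which supplies the sign $P_{\pm}^{\ast}(U)=-A^{\pm1}(K)$ via the longitude computation of Section~\ref{sec:annulus-dualizable-oriented}) with Lemma~\ref{lem:pattern-to-RGB-oriented} and the explicit diagrams of Section~\ref{sec:annulus-to-RGB}. Your identification of the orientation bookkeeping --- in particular that $G_{\pm}$ inherits the orientation of $\lambda_{V}$, i.e.\ of a meridian of $\beta$ homologous to $K$ --- matches the paper's argument.
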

%
%
\begin{figure}[h]
\centering
\includegraphics[scale=0.7]{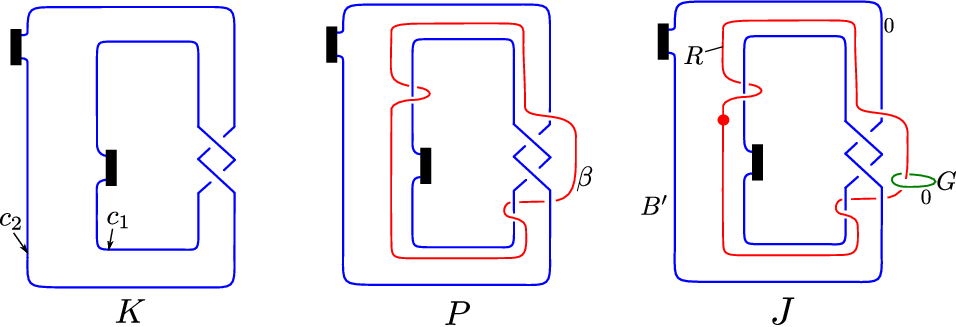}
\caption{(color online) A knot $K$ with a special annulus presentation (left), a dualizable pattern $P\subset \mathbf{S}^{3}\setminus \nu(\beta)$ satisfying $P(U)=K$ and $P^{\ast}(U)=A^{+1}(K)$ (center) and the Kirby diagram $J$ corresponding to $P$ (right). 
}\label{figure:annulus-RGB}
\end{figure}
\begin{figure}[h]
\centering
\includegraphics[scale=0.72]{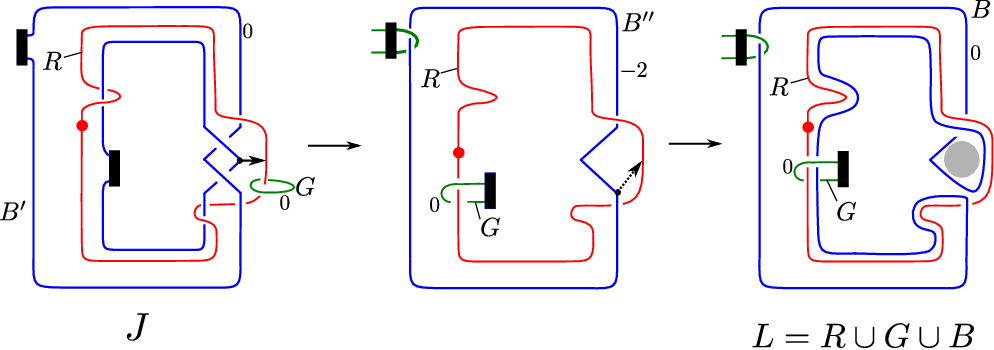}
\caption{(color online) From $J$ to the RGB-diagram $L$. 
In the gray area in the right picture, the band (the green component $G$) may appear. 
}\label{figure:annulus-RGB2}
\end{figure}
\begin{figure}[p]
\centering
\includegraphics[scale=0.7]{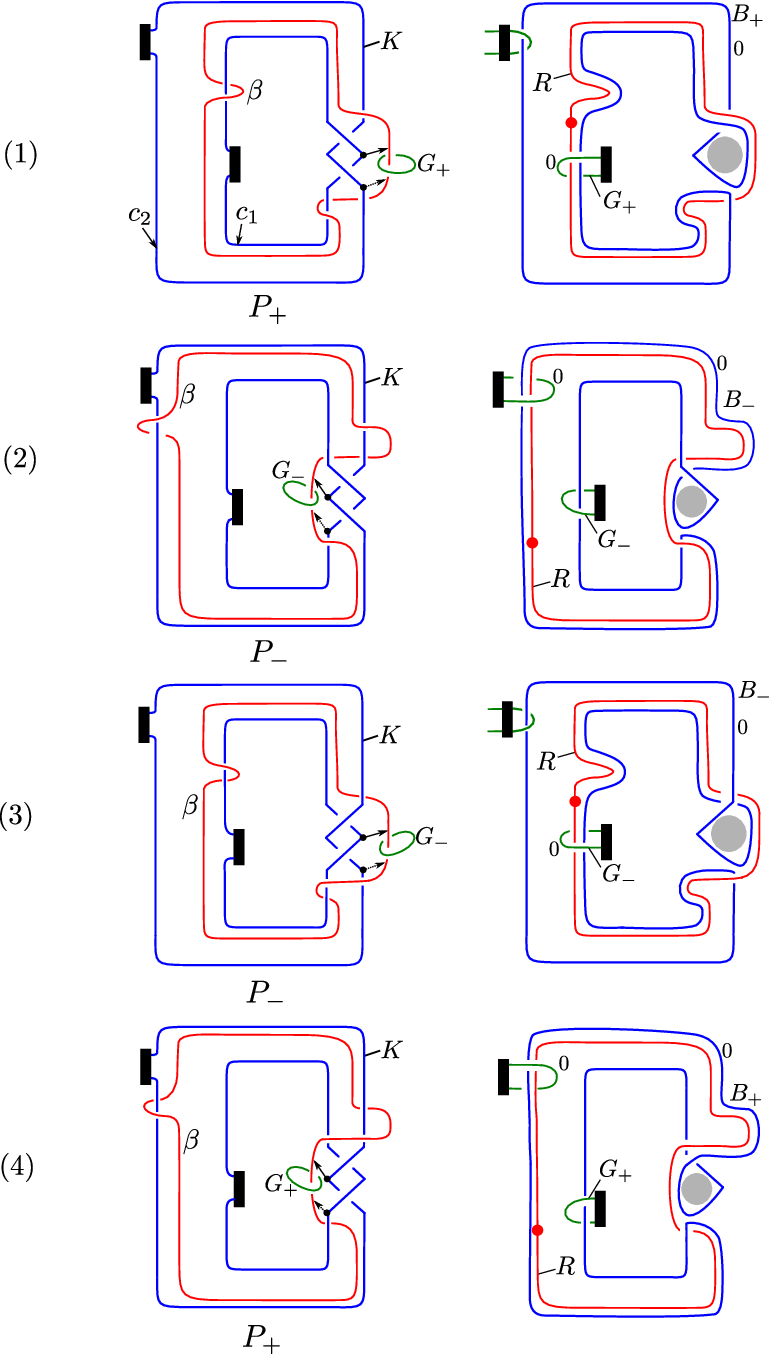}
\caption{(color online) 
The left blue curves represent knots $K$ which have special annulus presentations. 
The patterns $P_{\pm}$ are given by $K$ in $\mathbf{S}^{3}\setminus \nu(\beta)=V$, where the parameters of $V\cong \mathbf{S}^1\times D^{2}$ are given so that $P_{\pm}(U)=K$. 
The right pictures are $L_{\pm}=R\cup G_{\pm}\cup B_{\pm}$. 
The RGB-diagrams $L_{\pm}$ are obtained from the left pictures by (i) replacing $\beta$ with dotted circles $R$, 
(ii) regarding $K$ and the green curves $G_{\pm}$ as $0$-framed $2$-handles, (iii) sliding $K$ over $R$ along the black arrows and (iv) sliding $K$ over $R$ along the dotted black arrows. 
In the gray areas in the right pictures, the band (the green component $G_{\pm}$) may appear. 
}\label{figure:annulus-dualizable-RGB}
\end{figure}
%
%
%
\begin{rem}\label{rem:inverse}
We remark that there are a dualizable pattern and an RGB-diagram which do not arising from any special annulus presentation. 
In fact, the four-ball genera $g_4$ of knots with annulus presentations are less than $2$. 
On the other hand, Piccirillo \cite[Example~3.4]{Piccirillo} gave an RGB-diagram whose $K_{G}$ has $g_{4}(K_{G})= 2$. 
Moreover, by Proposition~\ref{prop:Piccirillo}, we can construct a dualzable pattern $P$ which satisfies $g_{4}(P^{\ast}(U))= 2$. 
\end{rem}
\begin{rem}\label{rem:unknotting-one}
Let $K$ be an unknotting number one knot. 
It is know that such a knot $K$ has a special annulus presentation (see \cite[Lemma~2.2]{AJOT}). 
Then, by Theorem~\ref{thm:annulus-RGB}, we obtain RGB-diagrams $L_{\pm}=R\cup G_{\pm}\cup B_{\pm}$ from the special annulus presentation. 
On the other hand, Piccirillo \cite{Piccirillo2} constructed an RGB-diagram from an unknotting number one knot. 
We see that the RGB-diagram is equal to $L_+$ or $L_{-}$.  
\end{rem}
\begin{ex}
Consider the special annulus presentation of $6_3$ given in Figure~\ref{figure:annulus-pre} (and see also Figure~\ref{figure:annulus-twist}). 
By applying Theorem~\ref{thm:annulus-RGB} and Figure~\ref{figure:annulus-dualizable-RGB} $(4)$, 
we obtain an RGB-diagram $L=R\cup G\cup B$ from the the special annulus presentation, which satisfies $K_{G}=A(6_3)$ and $K_{B}=6_3$ (see the bottom right picture in Figure~\ref{figure:ex}). 
\end{ex}
\begin{figure}[h]
\centering
\includegraphics[scale=0.74]{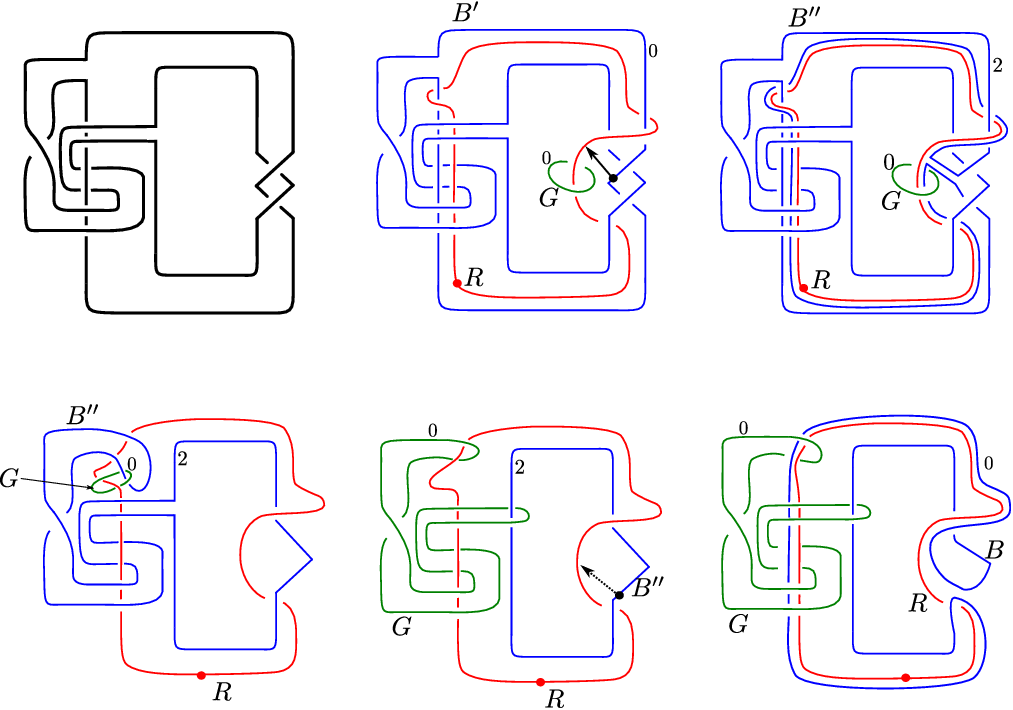}
\caption{
(color online)
A special annulus presentation (top left). 
A Kirby diagram $J$ in Section~\ref{sec:annulus-to-RGB} (top center). 
The bottom left and center pictures are isotopic to the top right picture. 
}\label{figure:ex}
\end{figure}
%
\section{Application}\label{sec:application}
In this section, as an application of the discussion in Section~\ref{sec:annulus-to-RGB}, we introduce a sufficient condition for a knot with a special annulus presentation $(A,b)$ to be either $K=A(K)$ or $K=A^{-1}(K)$. 
\begin{thm}\label{thm:trivial}
Let $K$ be an oriented knot with a special annulus presentation $(A, b)$ with $\partial A=c_1\cup c_2$. 
Give $A^{\pm1}(K)$ the orientation induced by $K$. 
Let $D_{i}$ be a disk bounded by $c_i$ for $i=1,2$. 
Then if $\operatorname{Int}(D_{i})\cap b=\emptyset$ for some $i$, we obtain either $A(K)=K$ or $A^{-1}(K)=K$ as oriented knots. 
\end{thm}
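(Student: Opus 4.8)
The plan is to recognize that the disk hypothesis forces the associated dualizable pattern to have geometric winding number one, and then to invoke the self-duality of such patterns.

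First, by Theorem~\ref{thm:annulus-RGB} (and its oriented refinement Theorem~\ref{thm:annulus-RGB-oriented}), the special annulus presentation $(A,b)$ produces a dualizable pattern $P$ in the solid torus $V=\mathbf{S}^3\setminus\nu(\beta)$ with $P(U)=K$ and $P^{\ast}(U)=A^{\varepsilon}(K)$ for a sign $\varepsilon=\pm1$ determined by the twisting of $A$ and the choice of $i$; here $\beta$ is the red curve of Figures~\ref{figure:annulus-dualizable1-2} and \ref{figure:annulus-dualizable3-4}. Since the hypothesis concerns one of the two boundary curves, I would treat the case $\operatorname{Int}(D_1)\cap b=\emptyset$ and take $\beta$ to be the pushoff of $c_1$, the case $i=2$ being symmetric with $\beta$ the pushoff of $c_2$. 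Whether $A(K)$ or $A^{-1}(K)$ appears is then dictated by whether $A$ is $+1$- or $-1$-twisted.

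Second, and this is the crux, I would show that under the hypothesis the pattern $P$ has geometric winding number one. A meridian disk of $V=\mathbf{S}^3\setminus\nu(\beta)$ is a disk in $\mathbf{S}^3$ bounded by $\beta$, and since $\beta$ is a parallel pushoff of $c_1$ along a thin annulus on $A$, such a disk is isotopic to the disk $D_1$. I would then count $K\cap D_1$: the two edges of the band $b$ contribute nothing because $\operatorname{Int}(D_1)\cap b=\emptyset$ (the band meets $\partial D_1=c_1$ only along the attaching region); the arc of $K$ coming from $c_1$ runs along $\partial D_1$ and can be pushed off the interior; and the arc coming from $c_2$ pierces $D_1$ exactly once because $\operatorname{lk}(c_1,c_2)=\pm1$. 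Hence $K$ can be isotoped in $V$ to meet a meridian disk in a single point, so the geometric winding number of $P$ is one.

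Finally, by Remark~\ref{rem:winding} (Figure~\ref{figure:gwinding}) a geometric winding number one pattern is dualizable with $P^{\ast}=P$. Combining this self-duality with the correspondence above gives $A^{\varepsilon}(K)=P^{\ast}(U)=P(U)=K$ as unoriented knots. To upgrade this to oriented knots I would track orientations through the oriented correspondence of Proposition~\ref{prop:Miller-Piccirillo-oriented}, checking that the recorded reversal $P^{\ast}(U)=-A^{\varepsilon}(K)$ is compensated by the orientation behavior of the winding-number-one self-duality, so that $A^{\varepsilon}(K)=K$ as oriented knots, which is precisely either $A(K)=K$ or $A^{-1}(K)=K$. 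The step I expect to be the main obstacle is the second one: justifying rigorously that the meridian disk of $V$ may be taken to be $D_1$ and that no intersections with $K$ are hidden near the attaching region, where $\beta$, the band, and the thin annulus on $A$ all interact; keeping the two cancelling orientation reversals consistent in the last step is the secondary technical point.
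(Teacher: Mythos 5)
Your overall strategy is sound and is in substance the paper's own argument transported from the RGB-diagram language into the dualizable-pattern language: the paper shows that under the hypothesis both $G_{+}$ and $B_{+}$ of the RGB-diagram $L_{+}$ meet a single disk $D_{R}$ in exactly one point, which is precisely the statement that the associated pattern has geometric winding number one, and then concludes $K_{G_{+}}=K_{B_{+}}$ from the symmetry of the two single handle slides across $D_{R}$. Your step identifying the meridian disk of $V=\mathbf{S}^{3}\setminus\nu(\beta)$ with (a pushoff of) $D_{i}$ and counting $K\cap D_{i}$ is the same geometric observation, and your case bookkeeping (which of $A^{\pm1}(K)$ occurs, depending on the twist of $A$ and on $i$) matches the paper's. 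For the \emph{unoriented} conclusion your argument is complete, modulo the same implicit standard-position assumption the paper makes (that the given disk $D_{i}$ meets the other boundary component $c_{j}$ geometrically once, not merely algebraically).

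The genuine gap is in the oriented upgrade, and it is exactly the point you defer. Remark~\ref{rem:winding} and Figure~\ref{figure:gwinding} assert only that a geometric winding number one pattern is its own dual, with no statement about orientations; if you read that self-duality naively as an equality of \emph{oriented} patterns, you get $P^{\ast}(U)=P(U)=K$, which combined with Proposition~\ref{prop:Miller-Piccirillo-oriented} ($P^{\ast}(U)=-A^{\varepsilon}(K)$) yields $A^{\varepsilon}(K)=-K$ --- the wrong sign unless $K$ is invertible. What the theorem actually needs is $P^{\ast}(U)=-P(U)$ as oriented knots for these patterns, and establishing that sign is the real content of the orientation argument: in the paper it is carried by the observation that $B_{+}$ and $G_{+}$ are homologous in $\mathbf{S}^{3}\setminus R$, so the two slides identify $K_{B_{+}}$ and $K_{G_{+}}$ as the same circle with \emph{opposite} orientations, giving $-K_{G_{+}}=K_{B_{+}}$ and hence $A^{+1}(K)=-K_{G_{+}}=K_{B_{+}}=K$. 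To complete your version you would need to prove an oriented refinement of the winding-number-one self-duality (that the identification of $(V^{\ast},P^{\ast})$ with $(V,P)$ reverses the string orientation in the relevant sense), rather than cite Remark~\ref{rem:winding}; as written, the compensation you hope for is asserted but not demonstrated, and the only citable statement points in the opposite direction.
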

\begin{proof}
We only consider the case where $A$ is $-1$ twisted and $D_{1}\cap b=\emptyset$. 
Then, for the oriented RGB-diagram $L_{+}=R\cup G_{+}\cup B_{+}$ given in Theorem~\ref{thm:annulus-RGB-oriented}, there is a disk $D_{R}$ bounded by $R$ such that the intersections $D_{R}\cap G_{+}$ and $D_{R}\cap B_{+}$ consist of exactly one point (see the center of Figure~\ref{figure:trivial-annulustwist}, where the points in $D_{R}\cap G_{+}$ and $D_{R}\cap B_{+}$ are drawn by black dots). 
By the definition of $K_{G_{+}}$ (see Section~\ref{sec:RGB-diagram}), the knot $K_{G_{+}}$ is obtained from $G_{+}$ by sliding $G_{+}$ over $B_{+}$ along the black arrow in the right picture in Figure~\ref{figure:trivial-annulustwist}. 
Similarly, the knot $K_{B_{+}}$ is obtained from $B_{+}$ by sliding $B_{+}$ over $G_{+}$ along the dotted black arrow in the right picture in Figure~\ref{figure:trivial-annulustwist}. 
Hence, we obtain $K_{G_{+}}=K_{B_{+}}$ as unoriented knots. 
Moreover, since $B_{+}$ and $G_{+}$ are homologous in $\mathbf{S}^{3}\setminus R$, we have $-K_{G_{+}}=K_{B_{+}}$ as oriented knots. 
By Theorem~\ref{thm:annulus-RGB-oriented}, we see that $A^{+1}(K)=-K_{G_{+}}=K_{B_{+}}=K$. 
\end{proof}
\begin{figure}[h]
\centering
\includegraphics[scale=0.8]{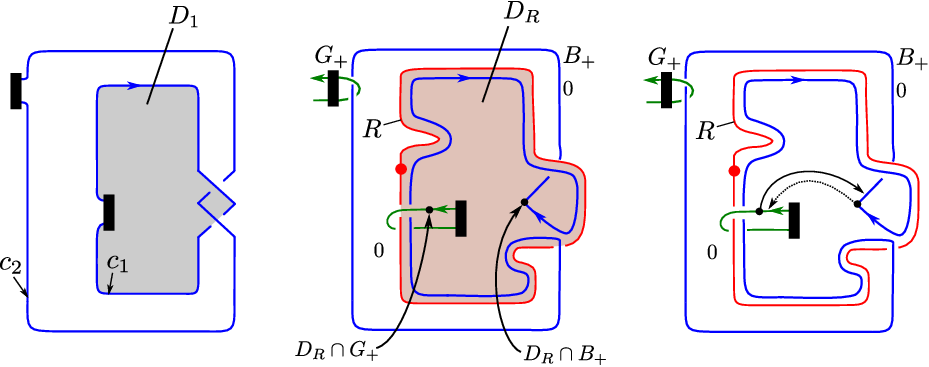}
\caption{(color online) In the gray area $D_{1}$, the band does not appear (left). 
The intersections $D_{R}\cap G_{+}$ and $D_{R}\cap B_{+}$ are only one point (center). 
}\label{figure:trivial-annulustwist}
\end{figure}
\begin{rem}
It is easy seen that a non-trivial knot satisfying the hypothesis of Theorem~\ref{thm:trivial} has unknotting number one. 
Conversely, a knot of unknotting number one has an annulus presentation which satisfies the hypothesis of Theorem~\ref{thm:trivial} 
by the construction given in \cite[Lemma~2.2]{AJOT}. 
\end{rem}
\begin{rem}
After submitting the first draft of this manuscript to arXiv, in \cite[Section~4.2]{Manolescu-Piccirillo}, Manolescu and Piccirillo introduced a result similar to Theorem~\ref{thm:trivial}.  
\end{rem}
%
\begin{ex}
Consider the special annulus presentation of $6_3$ given in Figure~\ref{figure:annulus-pre}. 
We see that this annulus presentation satisfies the condition of Theorem~\ref{thm:trivial}. 
In \cite[Section~2]{Abe-Tagami}, Abe and the author proved that $6_{3}\neq A(6_{3})$. 
Hence, by Theorem~\ref{thm:trivial}, we have $A^{-1}(6_{3})=6_{3}$. 
We can prove this fact directly (see Figure~\ref{figure:ex2}). 
\begin{figure}[h]
\centering
\includegraphics[scale=0.65]{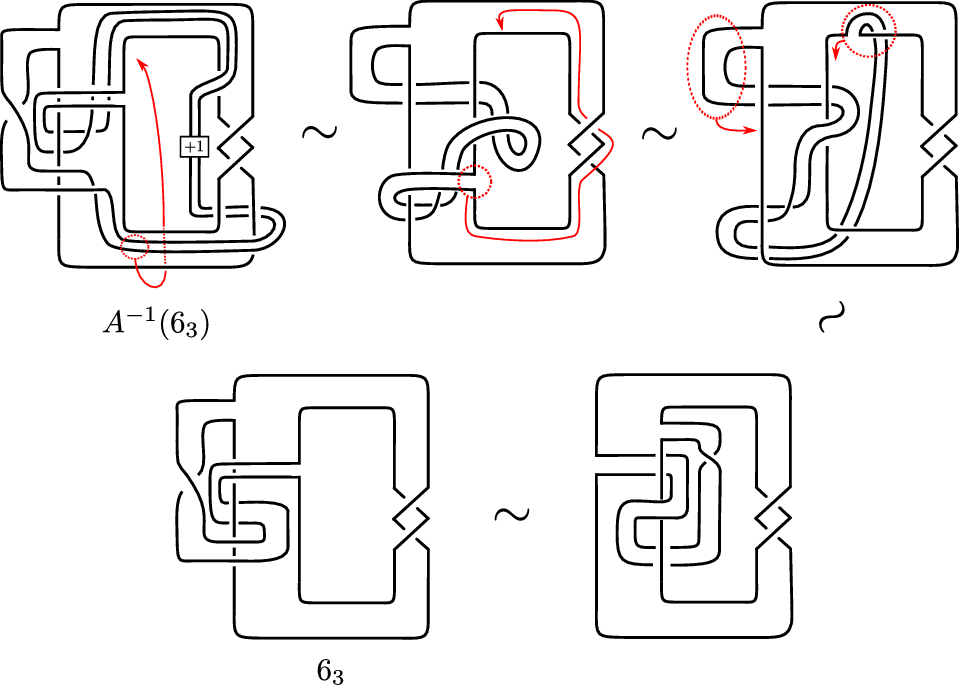}
\caption{
(color online) Direct proof for $A^{-1}(6_{3})=6_{3}$. Note that $6_3$ is invertible. 
The bottom left picture is obtained from the bottom right picture by flipping the annulus $A$. 
}\label{figure:ex2}
\end{figure}
\end{ex}
%
%
%
\noindent
\textbf{Acknowledgements.}
This work was supported by JSPS KAKENHI Grant numbers JP18K13416 and JP22K13923. 

\bibliographystyle{amsplain}
\bibliography{mrabbrev,tagami}
\end{document}